\newtheorem{theorem}{Theorem}[section]
\newtheorem{lemma}[theorem]{Lemma}
\theoremstyle{definition}
\newtheorem{definition}[theorem]{Definition}
\theoremstyle{remark}
\numberwithin{equation}{section}
\newcommand{\be}{\begin{equation}}
\newcommand{\ee}{\end{equation}}
\numberwithin{equation}{section}
\theoremstyle{definition}
\theoremstyle{definition}
\theoremstyle{remark}
\numberwithin{equation}{section}
\begin{document}

\noindent {}   \\[0.50in]


\title[The split common null point problem] {The split common null point problem for generalized resolvents and nonexpansive mappings in Banach spaces}

\author [Orouji and Soori]{Bijan Orouji$^{1}$, Ebrahim Soori$^{2,*}$}
\thanks{$^{*}$ Corresponding author\\ 2010 Mathematics Subject Classification. 47H10.\\ bijanorouji@yahoo.com(B. Orouji);
sori.e@lu.ac.ir(E. Soori)
}





\maketitle
\hrule width \hsize \kern 1mm

\begin{abstract}
In this paper,    the split common null point problem in two Banach spaces is considered. Then, using the generalized resolvents of maximal monotone operators and the generalized projections and an infinite family of nonexpansive mappings,    a strong convergence theorem for finding a solution of the split common null point problem in two Banach spaces in the presence of a sequence of errors  will be proved.

\textbf{Keywords}: Split common null point problem. Maximal monotone operator.   Generalized projection. Generalized resolvent. Nonexpansive mapping.
\end{abstract}
\maketitle
\vspace{0.1in}
\hrule width \hsize \kern 1mm

\section{\textbf{Introduction}}
Let $H_1$ and $H_2$ be two Hilbert spaces and  $C$ and $Q$,   two nonempty, closed and convex subsets of $H_1$ and $H_2$, respectively. Let $A:H_1\rightarrow H_2$ be a bounded linear operator. Then the \textit{split feasibility problem} (SFP) \cite{p6} is:   to find $z\in H_1$ such that $z\in C\cap A^{-1}Q$. There exists several generalizations of the SFP: the multiple set convex sets problem ( MSSFP)  \cite{p19, p7}, the split common fixed point problem (SCFPP) \cite{p8,p20}, and the split common null point problem (SCNPP) \cite{p5}. (SCNPP) is as follows: given set-valued mappings $M_1:H_1\rightarrow 2^{H_1}$ and $M_2:H_2\rightarrow 2^{H_2}$, and a bounded linear operator $A:E\rightarrow F$, find a point $z\in H_1$ such that
\begin{equation*}
  z\in M_1^{-1}0\cap A^{-1}(M_2^{-1}0),
\end{equation*}
where $M_1^{-1}0$ and $M_2^{-1}0$ are sets of null points of $M_1$ and $M_2$, respectively.
 Many authors have studied the split feasibility problem and the split common null point problem using  nonlinear operators and fixed points; see, for example, \cite{p3,p5,p8,p10,p19,p26,p20,p35}. However, we have not found many results outside of the framework  Hilbert spaces. Note that the first extension of SFP to Banach spaces is  appeared  in \cite{p28}, then  this scheme  was later extended to MSSFP in \cite{p36}. A very recent generalization for the SFP is  appeared in\cite{p29}. The split common null point problem in Banach spaces is also  solved by Takahashi \cite{p32,p33,p34}.

In this paper,  the split common null point problem with generalized resolvents of maximal monotone operators in two Banach spaces is considered. Then using the generalized resolvents of maximal monotone operators and the generalized projections, a strong convergence theorem for finding a solution of the split null point problem in two banach spaces in the presence of a sequence of errors  is proved.

\section{\textbf{Preliminaries}}
Let $ E $ be a real Banach space with the  norm $ \Vert. \Vert $ and   $E^{*}$,   the dual space of $E$. When $\lbrace  x_{n} \rbrace $ is a sequence in $E$,   the strong convergence   of $\lbrace x_{n}\rbrace $ to $ x\in E $ is denoted  by $x_{n}\rightarrow x$ and the weak convergence to $ x\in E $ is denoted  by $x_{n}\rightharpoonup x$. A Banach space $E$ is strictly convex if $ \Vert \frac{x+y}{2} \Vert <1$, whenever $x,y \in S(E)$,  $x\neq y $ and $S(E)$ is the unite sphere centered at  the origin of $E$. $E$ is said to be uniformly convex if $\delta_{E}(\epsilon)=0$ and $ \delta_{E}(\epsilon)>0$ for all $0<\epsilon\leq 2$ where $\delta_{E}(\epsilon)$ is the modulus of convexity of $E$ and is defined by
\begin{align}
\delta_{E}(\epsilon)=inf \bigg\lbrace 1-\frac{\Vert x+y \Vert}{2}:\Vert x\Vert,\Vert y\Vert\leq 1,\Vert x-y\Vert \geq \epsilon \bigg\rbrace
\end{align}
A uniformly convex Banach space is strictly convex and reflexive. It is also well known that a uniformly convex Banach space has Kadec Klee property, that is, $x_{n}\rightharpoonup u $ and $\Vert x_{n}\Vert \rightarrow \Vert u \Vert $ imply $ x_{n} \rightarrow u$, see \cite{p11,p23}. Furthermore, $E$ is called $p$-uniformly convex if there exists a constant $ c>0 $ such that $ \delta_{E}  \geq c\epsilon^{p} $ for all $ \varepsilon\in [0,2]$, where $p$ is a fixed real number with $ p\geq 2$. For example, the $ L_{p}$ space is $2$-uniformly convex for $1<p\leq 2$ and $p$-uniformly convex for $p\geq 2$, see   \cite{p25}. Let $U=\lbrace x\in E:\Vert x \Vert =1 \rbrace$. The norm of $E$ is said to be Gateaux differentiable if for each $x,y\in U$, the limit
\begin{align}
\lim _{t_{\rightarrow}0}\frac{\Vert x+ty\Vert-\Vert x\Vert}{t}
\end{align}
exists. In this case, $E$ is called smooth. The modulus of smoothness of $E$ is defined by
\begin{align}
\rho _{E}(t)= \sup \bigg\lbrace \frac{\Vert x+y\Vert +\Vert x-y\Vert}{2}-1 : x\in U,\Vert y\Vert \leq t\bigg\rbrace.
\end{align}
If $\displaystyle\lim_{t\rightarrow 0}\frac{\rho _{E}(t)}{t}=0$, then $E$ is called uniformly smooth. Let $q>1$. If there exists a fixed constant $c>0$ such that $\rho_{E}(t)\leq ct^{q}$, then $E$ is said to be $q$-uniformly smooth, see \cite{jouy}. It is well known that a uniformly convex Banach space is reflexive and strictly convex.

 The mapping $ J_{E}^{p}$ from $E$ to $2^{E^{*}}$ is  defined by
\begin{align}
J_{E}^{p}(x)=\lbrace x^{*} \in E^{*}:\langle x,x^{*} \rangle=\Vert x \Vert \Vert x^{*}\Vert ,\Vert x^{*}\Vert =\Vert x\Vert ^{p-1}\rbrace, \quad \forall x\in E
\end{align}
If $p=2$ then $ J_{E}^{2}=J_{E}$ is the normalized duality mapping on $E$.  Note that $E$ is smooth if and only if $J_{E}$ is a single-valued mapping of $E$ into $E^{*}$. We also know that $E$ is reflexive if and only if $J_{E}$ is surjective, and $E$ is strictly convex if and only if $J_{E}$ is one-to-one. Hence, if $E$ is smooth, strictly convex and reflexive Banach space, then $J_{E}$ is a single-valued, bijection and in this case, the inverse mapping $J_{E}^{-1}$ coincides with the duality mapping $J_{{E}^{*}}: E^{*}\rightarrow 2^{E}$, that means $ J_{E}^{-1}=J_{{E}^{*}}$. If $E$ is uniformly convex and uniformly smooth, then is uniformly norm-to-norm continuous on bounded sets of $E$ and $ J_{E}^{-1}=J_{{E}^{*}}$ is also uniformly norm-to-norm continuous on bounded sets of $E^{*}$. It is known that $E$ is $p$-uniformly convex if and only if its dual $E^{*}$ is $q$-uniformly smooth where $1<q\leq 2\leq p<\infty$ with $ \frac{1}{p}+\frac{1}{q}=1$. For more details about  the mapping $J_{E}^{p} $  refer to  \cite{ag, p17,p25, p11,p12,p23,p30,p31}.
\begin{lemma}\label{qww} \cite{p10q}
Let $x,y\in E$ if $E$ is $q$-uniformly smooth, then there is a $c_{q}>0$ so that
\begin{equation*}
\Vert x-y\Vert^{q} \leq \Vert x\Vert^{q}-q\langle y, J_{E}^{q}(x)\rangle +c_{q}\Vert y\Vert^{q}.
\end{equation*}
\end{lemma}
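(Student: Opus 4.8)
The plan is to recognize the assertion as the Xu-type smoothness inequality for $q$-uniformly smooth spaces and to obtain it from a symmetric ``parallelogram'' estimate combined with convexity of the norm. Since $E$ is $q$-uniformly smooth it is in particular smooth, so $J_E^q$ is single valued and coincides with the Gateaux gradient of the convex function $g(z)=\tfrac1q\Vert z\Vert^q$; concretely $\frac{d}{dt}\Vert z+th\Vert^q\big|_{t=0}=q\langle h,J_E^q(z)\rangle$. I would also record at the outset that $q$-uniform smoothness forces $1<q\le 2$, since no space can have a smaller modulus of smoothness than a Hilbert space; this is exactly what will make the second-order remainder terms below summable against $\Vert y\Vert^q$.

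The first main step is to establish the symmetric inequality
\[
\Vert x+y\Vert^q+\Vert x-y\Vert^q\le 2\Vert x\Vert^q+2c_q\Vert y\Vert^q,\qquad x,y\in E .
\]
By homogeneity I may normalize $\Vert x\Vert=1$, the case $x=0$ being trivial once $c_q\ge 1$. From the definition of $\rho_E$ and the hypothesis $\rho_E(t)\le ct^q$ one gets $\Vert x+y\Vert+\Vert x-y\Vert\le 2+2c\Vert y\Vert^q$. Writing $\Vert x\pm y\Vert=1+\alpha_\pm$, so that $|\alpha_\pm|\le\Vert y\Vert$ and $\alpha_++\alpha_-\le 2c\Vert y\Vert^q$, a second-order Taylor expansion of $t\mapsto(1+t)^q$ handles the regime $\Vert y\Vert\le\tfrac12$: the linear parts sum to at most $2qc\Vert y\Vert^q$, while the remainder is controlled because $(1+\theta\alpha_\pm)^{q-2}\le 2$ there and $\Vert y\Vert^2\le\Vert y\Vert^q$ for $\Vert y\Vert\le1$ and $q\le2$. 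For $\Vert y\Vert>\tfrac12$ the crude bound $\Vert x\pm y\Vert\le 1+\Vert y\Vert\le 3\Vert y\Vert$ yields the inequality after enlarging $c_q$.

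Granting the symmetric inequality, the second step is a one-line convexity argument: the subgradient inequality for $g$ gives $\Vert x-y\Vert^q\ge\Vert x\Vert^q-q\langle y,J_E^q(x)\rangle$, and substituting this into the symmetric estimate yields $\Vert x+y\Vert^q\le\Vert x\Vert^q+q\langle y,J_E^q(x)\rangle+2c_q\Vert y\Vert^q$. Replacing $y$ by $-y$ produces exactly the claimed inequality, with the stated constant absorbing the factor $2$.

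I expect the genuine obstacle to lie in the first step, namely propagating the first-power modulus-of-smoothness bound to a bound on the $q$-th powers: controlling only $\alpha_++\alpha_-$ does not control $\alpha_+^2+\alpha_-^2$, and the quadratic remainder only produces a term of order $\Vert y\Vert^2$. It is precisely the constraint $q\le2$ (forcing $\Vert y\Vert^2\le\Vert y\Vert^q$ when $\Vert y\Vert\le1$), together with the split into the two regimes $\Vert y\Vert\le\tfrac12$ and $\Vert y\Vert>\tfrac12$, that closes this gap. The remaining ingredients---single-valuedness of $J_E^q$, the convexity inequality, and the final sign flip---are routine.
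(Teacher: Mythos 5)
The paper itself offers no proof to compare against: Lemma \ref{qww} is quoted from \cite{p10q} and is Xu's classical inequality characterizing $q$-uniformly smooth spaces (see also \cite{p13} in the bibliography). Your argument is correct in substance, but it follows a genuinely different route from the one in those sources. The standard proof there uses the integral representation, valid in smooth spaces,
\[
\Vert x-y\Vert^{q}-\Vert x\Vert^{q}+q\langle y,J_{E}^{q}(x)\rangle
= q\int_{0}^{1}\big\langle y,\,J_{E}^{q}(x)-J_{E}^{q}(x-ty)\big\rangle\,dt ,
\]
and bounds the right-hand side by $c_{q}\Vert y\Vert^{q}$ via the H\"older continuity $\Vert J_{E}^{q}(u)-J_{E}^{q}(v)\Vert\le C\Vert u-v\Vert^{q-1}$, which is itself equivalent to $q$-uniform smoothness; so the real work is in the continuity of the duality map. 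You instead pass through the symmetric estimate $\Vert x+y\Vert^{q}+\Vert x-y\Vert^{q}\le 2\Vert x\Vert^{q}+2c_{q}\Vert y\Vert^{q}$ (another of Xu's equivalent characterizations) and finish with the elementary subgradient inequality for $\tfrac{1}{q}\Vert\cdot\Vert^{q}$; this avoids any continuity property of $J_{E}^{q}$ at the cost of the two-regime Taylor computation, and you correctly isolate the two points that make it close ($(1+\theta\alpha_{\pm})^{q-2}\le 2^{2-q}\le 2$ and $\alpha_{\pm}^{2}\le\Vert y\Vert^{2}\le\Vert y\Vert^{q}$, both of which need $q\le 2$), as well as the harmless doubling of the constant in the final sign flip. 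One caveat: your opening claim that $q$-uniform smoothness forces $q\le 2$ rests on Nordlander's bound $\rho_{E}(t)\ge\sqrt{1+t^{2}}-1$, which requires $\dim E\ge 2$. For $E=\mathbb{R}$ the modulus degenerates, so $\mathbb{R}$ is ``$q$-uniformly smooth'' for every $q>1$ under the paper's definition, and the asserted inequality is then actually false for $q>2$ (test $x=1$, $y=\epsilon\to 0$); thus the degenerate one-dimensional case must be excluded from the lemma itself, and your proof is complete precisely on the nondegenerate cases where the statement holds.
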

Suppose that $E$ is a smooth Banach space and   $J$ is the duality mapping on $E$. Define a function $\phi :E\times E\rightarrow \mathbb{R}$ by
\begin{equation}
\phi_{E}(x,y)=\Vert x\Vert^{2}-2\langle x,Jy\rangle +\Vert y\Vert^{2} \quad \forall x,y\in E.
\end{equation}
Observer that, in a Hilbert space H, $\phi(x,y)=\Vert x-y\Vert^{2}$ for all $ x,y\in H$. Furthermore, we know that for each $x,y,z,\omega \in E$,
\begin{equation}\label{iokl}
(\Vert x \Vert -\Vert y \Vert )^{2}\leq \phi (x,y)\leq (\Vert x \Vert +\Vert y \Vert )^{2};
\end{equation}
\begin{equation}\label{whyu}
\phi \Big(t,J^{-1}\big(\lambda Jx+(1-\lambda )Jy\big)\Big) \leq \lambda \phi(t,x)+(1-\lambda )\phi (t,y);
\end{equation}
\begin{equation}
2\langle x-y,Jz-J\omega \rangle = \phi(x,\omega )+\phi(y,z)-\phi(x,z)-\phi(y,\omega ).
\end{equation}
if $E$ is   additionally assumed to be a strictly convex Banach space, then
\begin{equation*}
\phi(x,y)=0 \quad \Longleftrightarrow \quad x=y.
\end{equation*}
The following lemma is due to Kamimura and Takahashi \cite{p16}.
\begin{lemma}\cite{p16}\label{qee}
Let $E$ be a uniformly convex and smooth Banach space and let $\lbrace y_{n}\rbrace ,\lbrace z_{n}\rbrace$ be two sequences of $E$, if $ \displaystyle\lim_{n\rightarrow \infty}\phi (y_{n},z_{n})=0$ and either $\lbrace y_{n}\rbrace $ or $\lbrace z_{n}\rbrace$ is bounded, then $\displaystyle\lim_{n\rightarrow \infty}(y_{n}-z_{n})=0$.
\end{lemma}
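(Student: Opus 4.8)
The plan is to reduce everything to controlling the ordinary norm $\|y_n-z_n\|$ and to extract this control from the uniform convexity of $E$, since in a general Banach space the quantity $\phi(y_n,z_n)$ involves only the duality pairing and does not a priori dominate $\|y_n-z_n\|$ (in a Hilbert space it does, $\phi(x,y)=\|x-y\|^2$, which is why the statement is trivial there).

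First I would use the two-sided estimate \eqref{iokl}. From $(\|y_n\|-\|z_n\|)^2\le\phi(y_n,z_n)\to 0$ I get $\|y_n\|-\|z_n\|\to 0$; combined with the boundedness of one of the two sequences this forces both to be bounded, say $\|y_n\|\le r$ and $\|z_n\|\le r$ for all $n$ and some $r>0$. It therefore suffices to show that, under this standing bound, $\phi(y_n,z_n)\to 0$ implies $\|y_n-z_n\|\to 0$.

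The heart of the matter is a quantitative lower bound: I claim that for every $r>0$ there is a continuous, strictly increasing function $g=g_r:[0,\infty)\to[0,\infty)$ with $g(0)=0$ such that
\[
g(\|x-y\|)\le\phi(x,y)\qquad\text{whenever }\|x\|\le r,\ \|y\|\le r.
\]
Granting this, the lemma is immediate, since $g(\|y_n-z_n\|)\le\phi(y_n,z_n)\to 0$ and the continuity of $g^{-1}$ at $0$ give $\|y_n-z_n\|\to 0$. Equivalently, one may argue by contradiction: if $\|y_{n_i}-z_{n_i}\|\ge\epsilon_0>0$ along a subsequence, it is enough to prove $\liminf_i\phi(y_{n_i},z_{n_i})>0$. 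To produce the lower bound I would write
\[
\phi(x,y)=(\|x\|-\|y\|)^2+2\big(\|x\|\,\|y\|-\langle x,Jy\rangle\big),
\]
where the first term controls the discrepancy of the norms and the second is nonnegative because $\langle x,Jy\rangle\le\|x\|\,\|y\|$. For the second term, assuming $x,y\neq 0$, set $u=x/\|x\|$, $v=y/\|y\|$ and $f=Jy/\|y\|$, so that $\|f\|=1$ and $\langle v,f\rangle=1$. Then $\|x\|\,\|y\|-\langle x,Jy\rangle=\|x\|\,\|y\|\,(1-\langle u,f\rangle)$, and if $\|u-v\|\ge\epsilon'$ one has, using $\|(u+v)/2\|\ge\langle(u+v)/2,f\rangle=1-\tfrac12(1-\langle u,f\rangle)$ together with $\|(u+v)/2\|\le 1-\delta_E(\epsilon')$ from the definition of the modulus of convexity, the bound $1-\langle u,f\rangle\ge 2\delta_E(\epsilon')$. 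Hence $\|x\|\,\|y\|-\langle x,Jy\rangle\ge 2\|x\|\,\|y\|\,\delta_E(\epsilon')>0$, which is where uniform convexity ($\delta_E(\epsilon')>0$) is genuinely used.

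I expect the main obstacle to be the bookkeeping that combines the two contributions into a bound phrased in terms of $\|x-y\|$ itself rather than of the normalized difference $\|u-v\|$, while handling the degenerate regimes near the origin. Concretely, from $\|x-y\|\ge\epsilon$ one splits cases: if $\big|\,\|x\|-\|y\|\,\big|\ge\epsilon/2$ the first term already yields $\phi\ge\epsilon^2/4$; otherwise the decomposition $x-y=\|x\|(u-v)+(\|x\|-\|y\|)v$ gives $\|u-v\|\ge\epsilon/(2r)$, and the same inequality $\|x\|+\|y\|\ge\|x-y\|\ge\epsilon$ forces $\|x\|,\|y\|>\epsilon/4$, so the second term is bounded below by a positive multiple of $\delta_E(\epsilon/2r)$. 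Taking the minimum produces a strictly positive $c(\epsilon,r)$, hence the function $g$, and completes the proof. The only subtlety to watch is that uniform convexity—not mere strict convexity—is what keeps this lower bound uniformly away from $0$ as $\|x-y\|$ stays bounded below.
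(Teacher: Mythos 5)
Your argument is correct, but there is nothing in the paper to compare it against line by line: the paper does not prove this lemma, it quotes it from Kamimura and Takahashi \cite{p16}, where it appears (essentially as their Proposition 2) and is deduced from a companion result asserting that for each $r>0$ there is a strictly increasing, continuous, convex function $g$ with $g(0)=0$ and $g(\|x-y\|)\le\phi(x,y)$ on the ball of radius $r$ --- a result whose proof in \cite{p16} rests on Xu's inequality characterizing uniform convexity (reference \cite{p13} of this paper). Your route to the same quantitative lower bound is genuinely different and more self-contained: you work directly from the definition of the modulus of convexity, writing $\phi(x,y)=(\|x\|-\|y\|)^2+2\big(\|x\|\,\|y\|-\langle x,Jy\rangle\big)$, normalizing $u=x/\|x\|$, $v=y/\|y\|$, $f=Jy/\|y\|$, and playing $\|(u+v)/2\|\ge 1-\frac{1}{2}\big(1-\langle u,f\rangle\big)$ (since $\|f\|=1$, $\langle v,f\rangle=1$) against $\|(u+v)/2\|\le 1-\delta_E(\epsilon')$; this correctly gives $1-\langle u,f\rangle\ge 2\delta_E(\epsilon')$, and your case split ($|\,\|x\|-\|y\|\,|\ge\epsilon/2$ versus the normalized estimate, with $\|x\|,\|y\|>\epsilon/4$ making the normalizations legitimate) assembles the positive lower bound $c(\epsilon,r)=\min\{\epsilon^2/4,\,(\epsilon^2/4)\,\delta_E(\epsilon/(2r))\}$, which together with the boundedness step from \eqref{iokl} is all the sequential statement needs. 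Two minor points of presentation: establish $\|x\|,\|y\|>\epsilon/4$ \emph{before} introducing $u$ and $v$, since they must be defined; and note that what you actually construct is, for each $\epsilon$, a positive constant rather than the advertised continuous strictly increasing $g$ --- the contradiction formulation you give is the operative one and it suffices. What the Xu-based route of \cite{p16} buys in exchange for invoking a nontrivial external inequality is the stronger, reusable conclusion of a single convex modulus $g$ valid uniformly on bounded sets; your proof trades that generality for elementarity, which is a perfectly sound exchange for proving exactly this lemma.
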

Suppose that  $C$ is a nonempty, closed and convex subset of a smooth, strictly convex, and reflexive Banach space $E$. Then  for any $x\in E$, there exists a unique element $ z\in C$ such that
\begin{equation}
\phi (z,x)=\min_{y\in C}\phi (y,x).
\end{equation}
The mapping $\Pi_C :E\rightarrow C$ defined by $z=\Pi_C x$ is called the generalized projection of $E$ onto $C$. For example, see \cite{p1,p2,p16}.
\begin{lemma}\cite{p37}\label{qrr}
Let $E$ be a smooth, strictly convex and reflexive Banach space and $C$ be a nonempty, closed and convex subset of $E$ and let $ x \in E $ and $ z\in C $. Then, the following condition hold:
\begin{flushleft}
 \begin{itemize}
  \item [{\rm($1$)}] $\phi (z,\Pi_Cx)+\phi (\Pi_Cx,x)\leq \phi(z,x)\quad \forall x\in C,y\in E $;
  \item [{\rm($2$)}] $z=\Pi_Cx\Longleftrightarrow \langle y-z,Jx-Jz\rangle \leq 0\quad \forall y\in C$.
  \end{itemize}
\end{flushleft}
\end{lemma}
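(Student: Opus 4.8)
The plan is to reduce both parts to a single three-point identity together with one analytic fact about smooth norms. First I would record the identity
\[
\phi(y,x) = \phi(y,z) + \phi(z,x) + 2\langle y-z,\, Jz - Jx\rangle \qquad (y,z\in E,\ x\in E),
\]
which follows from the four-point relation $2\langle x-y,Jz-J\omega\rangle = \phi(x,\omega)+\phi(y,z)-\phi(x,z)-\phi(y,\omega)$ stated above by specializing its arguments and using $\phi(z,z)=0$. This identity is the engine driving everything: the cross term $2\langle y-z, Jz-Jx\rangle$ is precisely the quantity controlled by the variational inequality in part (2), and its sign is what converts the identity into the estimate in part (1).

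For the nontrivial direction ($\Rightarrow$) of part (2), set $z=\Pi_C x$ and fix $y\in C$. By convexity of $C$ the point $z_t:=(1-t)z+ty$ lies in $C$ for all $t\in[0,1]$, and the defining minimality of the generalized projection gives $\phi(z_t,x)\ge \phi(z,x)$. Expanding $\phi$ and cancelling, this reads $\|z_t\|^2-\|z\|^2 \ge 2t\,\langle y-z, Jx\rangle$. Dividing by $t>0$ and letting $t\to 0^+$, the Gateaux differentiability of $\tfrac12\|\cdot\|^2$ in the smooth space $E$ (whose derivative at a point is the value of the duality mapping $J$ there) yields $\langle y-z, Jz\rangle \ge \langle y-z, Jx\rangle$, i.e.\ $\langle y-z, Jx-Jz\rangle\le 0$, which is exactly the asserted inequality.

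Part (1) then follows at once: applying the three-point identity with the projection point $\Pi_C x$ and any running point $z\in C$,
\[
\phi(z,x) = \phi(z,\Pi_C x) + \phi(\Pi_C x, x) + 2\langle z-\Pi_C x,\, J(\Pi_C x) - Jx\rangle,
\]
where the cross term is nonnegative by the inequality just established (applied with $y$ replaced by $z$), so $\phi(z,\Pi_C x)+\phi(\Pi_C x,x)\le \phi(z,x)$ for every $z\in C$. For the converse ($\Leftarrow$) of part (2), suppose $z\in C$ satisfies $\langle y-z, Jx-Jz\rangle\le 0$ for all $y\in C$; substituting into the same identity gives $\phi(y,x)=\phi(y,z)+\phi(z,x)+2\langle y-z,Jz-Jx\rangle\ge \phi(z,x)$ for all $y\in C$, using $\phi(y,z)\ge 0$ and the sign of the cross term, so $z$ minimizes $\phi(\cdot,x)$ over $C$ and the uniqueness of the generalized projection forces $z=\Pi_C x$.

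The one genuinely analytic step, and the place to be careful, is the passage to the limit $t\to 0^+$ in part (2): it is exactly here that smoothness of $E$ enters, guaranteeing that $t\mapsto\|z_t\|^2$ is differentiable with derivative $2\langle y-z, Jz_t\rangle$ and that this one-sided derivative at $t=0$ equals $2\langle y-z, Jz\rangle$. Everything else is algebra generated by the three-point identity, with strict convexity and reflexivity entering only through the already-established existence and uniqueness of $\Pi_C x$.
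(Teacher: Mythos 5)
Your proof is correct. Note that the paper itself offers no proof of this lemma: it is quoted from the reference of Jouymandi and Moradlou, and ultimately these are the classical properties of the generalized projection due to Alber and to Kamimura--Takahashi, so there is no internal argument to compare against. Your route is essentially the standard one from that literature: the three-point identity $\phi(y,x)=\phi(y,z)+\phi(z,x)+2\langle y-z,Jz-Jx\rangle$ (a correct specialization of the paper's four-point relation, using $\phi(z,z)=0$), the variational inequality obtained by perturbing along the segment $(1-t)\Pi_Cx+ty\subseteq C$ and differentiating $t\mapsto\tfrac12\|z_t\|^2$ via the Gateaux differentiability of the norm in a smooth space, and then uniqueness of the minimizer (which requires strict convexity together with reflexivity, and which the paper records just before the lemma) to close the converse direction. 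Each step checks out: the expansion $\|z_t\|^2-\|z\|^2\ge 2t\langle y-z,Jx\rangle$ is exactly what minimality gives, the one-sided derivative at $t=0^+$ is indeed $2\langle y-z,Jz\rangle$, and parts (1) and the two directions of (2) all follow from the identity with the correct signs. You also implicitly repair the typo in the paper's statement of (1), where the quantifiers should read ``for all $z\in C$ and $x\in E$.''
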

Suppose that  $M$ is a mapping of $E$ into $2^{E^{*}}$ for the Banach space $E$. The effective domain of $M$ is denote by $ dom(M) $, that is, $ dom(M)=\lbrace x\in E: Mx\neq \emptyset \rbrace $. A multi-valued mapping $M$ on $E$ is said to be monotone if $\langle x-y, u^{*}-v^{*}\rangle\geq 0 $ for all $ x,y \in dom(M), u^{*} \in Mx$ and $v^{*}\in My $.
A monotone operator $M$ on $E$ is said to be maximal if it's graph is not property contained in the graph of any other monotone operator on $E$.
\begin{theorem}\cite{p4,p24}\label{qtt}
Let $E$ be a uniformly convex and smooth Banach space and let $J$ be the duality mapping of $E$ into $E^*$. Let $M$ be a monotone operator of $E$ into $2^{E^{*}}$. Then $M$ is maximal if and only if for any $r>0$,
\begin{equation*}
R(J+rM)=E^{*},
\end{equation*}
where $R(J+rM)$ is the range of $J+rM$.
\end{theorem}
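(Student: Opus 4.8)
The plan is to prove the two implications separately. The direction ``$R(J+rM)=E^*$ for every $r>0$ $\Rightarrow$ $M$ maximal'' is a short monotonicity computation and is where I would start; the converse ``$M$ maximal $\Rightarrow R(J+rM)=E^*$'' is the genuinely hard half and needs an approximation-and-limit argument exploiting that $E$ is uniformly convex and smooth.

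For the easy direction, suppose $R(J+rM)=E^*$ for some fixed $r>0$ and let $(x_0,u_0^*)\in E\times E^*$ be monotonically related to the whole graph of $M$, i.e. $\langle x-x_0, u^*-u_0^*\rangle\ge 0$ for all $x\in\mathrm{dom}(M)$ and $u^*\in Mx$; I must show $u_0^*\in Mx_0$. Since $Jx_0+ru_0^*\in E^*=R(J+rM)$, there exist $x_1\in\mathrm{dom}(M)$ and $u_1^*\in Mx_1$ with $Jx_1+ru_1^*=Jx_0+ru_0^*$, so $u_1^*-u_0^*=\tfrac1r(Jx_0-Jx_1)$. Feeding $(x_1,u_1^*)$ into the hypothesis gives $\langle x_1-x_0, u_1^*-u_0^*\rangle\ge 0$, i.e. $\langle x_1-x_0, Jx_1-Jx_0\rangle\le 0$. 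But on a strictly convex space the duality mapping is strictly monotone, and uniform convexity implies strict convexity, so this forces $x_1=x_0$, whence $u_1^*=u_0^*\in Mx_1=Mx_0$. Thus $(x_0,u_0^*)$ lies on the graph and $M$ is maximal.

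For the hard direction I would fix $f^*\in E^*$ and $r>0$ and seek $x$ with $f^*\in Jx+rMx$. First I reduce to a normalized problem: the operator $x\mapsto rMx-f^*$ is again maximal monotone, and solvability of $f^*\in Jx+rMx$ is equivalent to $0\in R(J+M)$ for this shifted operator (which I continue to call $M$); so the goal becomes to find $x$ with $-Jx\in Mx$. I would then attack this by a Galerkin scheme: on an increasing net of finite-dimensional subspaces $E_\alpha$ with dense union, solve a finite-dimensional monotone equation, solvable by Brouwer's fixed point theorem precisely because the duality term is coercive ($\langle Jx,x\rangle=\Vert x\Vert^2$). The same coercivity makes the approximate solutions $x_\alpha$ automatically bounded, and reflexivity of $E$ then yields a weakly convergent subnet $x_\alpha\rightharpoonup x$. (Alternatively one may invoke Rockafellar's sum theorem to see that $J+M$ is maximal monotone and coercive, hence surjective, but the Galerkin route keeps the dependence on the hypotheses transparent.)

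The crux, and the main obstacle, is the limit passage $x_\alpha\rightharpoonup x\Rightarrow -Jx\in Mx$, and this is exactly where the structure of $E$ is used. I would run a Minty-type argument: for an arbitrary $(y,w^*)\in\mathrm{graph}(M)$ the monotonicity built into the approximating problems yields, up to a term tending to $0$, the inequality $\langle x_\alpha-y,\,-Jx_\alpha-w^*\rangle\ge 0$. From these I would first extract $\limsup_\alpha\langle Jx_\alpha,\,x_\alpha-x\rangle\le 0$, then invoke the property $(S)_+$ of the duality mapping on a uniformly convex space (equivalently the Kadec--Klee property recorded in the Preliminaries) to upgrade $x_\alpha\rightharpoonup x$ to strong convergence $x_\alpha\to x$. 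Uniform smoothness of $E$ then gives $Jx_\alpha\to Jx$ in norm, so the limit can be taken to obtain $\langle x-y,\,-Jx-w^*\rangle\ge 0$ for every $(y,w^*)\in\mathrm{graph}(M)$; maximality of $M$ finally converts this monotone relation into $-Jx\in Mx$. The delicate points are the boundedness and solvability of the finite-dimensional problems and, above all, the strong-convergence upgrade via $(S)_+$, since without norm convergence of $Jx_\alpha$ the nonlinear duality term cannot be passed to the limit.
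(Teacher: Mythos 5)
The paper itself gives no proof of Theorem \ref{qtt}: it is imported verbatim from Browder \cite{p4} and Rockafellar \cite{p24}, so your attempt can only be measured against the classical arguments in those references. Your easy direction is correct and is exactly the standard one: from $Jx_0+ru_0^*\in R(J+rM)$ you produce $(x_1,u_1^*)$ in the graph with $u_1^*-u_0^*=\frac{1}{r}(Jx_0-Jx_1)$, and monotone relatedness plus strict monotonicity of $J$ (legitimate here, since uniform convexity implies strict convexity and smoothness makes $J$ single-valued) forces $x_1=x_0$ and $u_1^*=u_0^*$. Your $(S)_+$ claim is also sound in itself, since it follows from the Kadec--Klee property quoted in the Preliminaries.

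The genuine gap is in the hard direction, and it sits exactly at the point that makes this theorem nontrivial. A Galerkin scheme over finite-dimensional subspaces $E_\alpha$ presupposes an operator defined (and demicontinuous) on all of $E_\alpha$; a general maximal monotone $M$ has an arbitrary domain, so $\mathrm{dom}(M)\cap E_\alpha$ may be empty or a single point, the compression of $M$ to $E_\alpha$ need not be maximal monotone in $E_\alpha$, and there is no finite-dimensional monotone equation to which Brouwer's theorem can be applied. The same defect kills your Minty passage: the inequality $\langle x_\alpha-y,\,-Jx_\alpha-w^*\rangle\geq 0$ must be available for \emph{arbitrary} $(y,w^*)$ in the graph of $M$, but a subspace scheme only controls test elements lying in $E_\alpha$, and a point $y\in\mathrm{dom}(M)$ can in general neither be taken in nor approximated within $E_\alpha\cap\mathrm{dom}(M)$. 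What your scheme proves is the Browder--Minty surjectivity theorem for everywhere-defined demicontinuous monotone operators, not the present statement. The cited proofs instead approximate by finite subsets $F$ of the \emph{graph} of $M$: the Debrunner--Flor lemma (a Brouwer/KKM argument on the convex hull of the points of $F$) yields $x_F$ with $\langle y-x_F,\,w^*+Jx_F\rangle\geq 0$ for all $(y,w^*)\in F$, and the limit over the net of finite subsets is handled by a Minty-type trick exploiting $\langle Jx,x\rangle=\Vert x\Vert^2$ and weak lower semicontinuity of the norm, with no strong-convergence upgrade needed (or available). Finally, your parenthetical fallback --- Rockafellar's sum theorem together with ``coercive maximal monotone operators are surjective'' --- is circular: in the standard development both of those results are themselves deduced from the very equivalence you are trying to prove.
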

Let $E$ be a uniformly convex Banach space with a Gateaux differentiable norm and let $M$ be a maximal monotone operator of $E$ into $2^{E^{*}}$. For all $x\in E$ and $r>0$, we consider the following equation
\begin{equation*}
Jx\in Jx_r+rMx_r
\end{equation*}
This equation has a unique solution $x_r$. In fact, it is obvious from Theorem 3.1 that there exists a solution $x_r$ of $Jx\in Jx_r+rMx_r$. Assume that $Jx\in Ju+rMu$ and $Jx\in Jv+rMv$. Then there exist $\omega_1\in Mu$ and $\omega_2 \in Mv$ such that $Jx=Ju+r\omega_1$ and $Jx=Jv+r\omega_2$. So, we have that
\begin{align*}
0&=\langle u-v, Jx-Jx\rangle \\ &=\langle u-v, Ju+r\omega_1-(Jv+r\omega_2)\rangle \\
&=\langle u-v, Ju-Jv+r\omega_1-r\omega_2\rangle \\ &=\langle u-v, Ju-Jv\rangle + \langle u-v, r\omega_1- r\omega_2\rangle \\ &=\phi (u,v)+\phi (v,u)+r\langle u-v,\omega_1-\omega_2\rangle \\ &\geq \phi (u,v)+\phi (v,u)
 \end{align*}
and hence $0=\phi(u,v)=\phi(v,u)$. Since $E$ is strictly convex, we have $u=v$. We defined $J_r^M$ by $x_r=J_r^Mx$, such that $J_r^M, r>0$ are called the generalized resolvents of $M$. The set of null points of $M$ is defined by $M^{-1}0=\lbrace z\in E:0\in Mz\rbrace $. We know that $M^{-1}0$ is closed and convex; see[26]. Furthermore
\begin{align}\label{ghyyy}
 \langle J_r^Mx-y,J(x-J_r^Mx)\rangle \geq 0,
 \end{align}
  hold for all $x\in E$ and $y\in M^{-1}0$;see \cite{p26}.
\begin{lemma}\cite{p9}\label{qyy}
Let $E$ be a real reflexive, strictly convex and smooth Banach space, $M:E\rightarrow 2^{E^{*}}$ be a maximal monotone with $M^{-1}0\neq \emptyset$, then for any $x\in E , y\in M^{-1}0$ and $r>0$, we have
\begin{equation*}
\phi(y,J_r^Mx)+\phi(J_r^Mx,x)\leq \phi(y,x).
\end{equation*}
where $J_r^M:E\rightarrow E$ is defined by $J_r^M:=(J+rM)^{-1}J$.
\end{lemma}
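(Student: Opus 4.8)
The plan is to unwind the definition of the generalized resolvent into a monotonicity inequality, and then to translate that inequality into a statement about $\phi$ by means of the four-point identity recorded earlier in the excerpt.

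First I would put $z := J_r^M x$. By the defining relation $J_r^M = (J+rM)^{-1}J$ this means $Jx \in Jz + rMz$, so that $\tfrac{1}{r}\,(Jx - Jz) \in Mz$. Since $y \in M^{-1}0$ we have $0 \in My$, and hence the monotonicity of $M$ applied to the two pairs $\big(z,\tfrac{1}{r}(Jx-Jz)\big)$ and $(y,0)$ in its graph yields
$$\Big\langle z - y,\ \tfrac{1}{r}(Jx - Jz) - 0\Big\rangle \ge 0.$$
As $r>0$, this reduces to the single scalar inequality $\langle z - y,\, Jx - Jz\rangle \ge 0$, which is the analytic heart of the lemma.

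Next I would invoke the identity $2\langle a-b,\,Jc - Jd\rangle = \phi(a,d) + \phi(b,c) - \phi(a,c) - \phi(b,d)$ with the substitution $a=z$, $b=y$, $c=x$, $d=z$. This converts the inner product above into a combination of $\phi$-values:
$$2\langle z - y,\, Jx - Jz\rangle = \phi(z,z) + \phi(y,x) - \phi(z,x) - \phi(y,z).$$
Because $\langle z,Jz\rangle = \|z\|^2$ we have $\phi(z,z)=0$, and since the left-hand side is nonnegative by the previous step, a rearrangement gives $\phi(y,z) + \phi(z,x) \le \phi(y,x)$. Substituting back $z = J_r^M x$ is precisely the claimed inequality.

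I do not expect a genuine obstacle here: once the resolvent inclusion is written down, everything collapses to the monotonicity inequality together with one algebraic identity, the only point requiring care being the choice of arguments in the four-point identity so that the extraneous term vanishes through $\phi(z,z)=0$. The standing hypotheses enter only to guarantee that the objects involved are well defined --- smoothness makes $J$ single-valued, so that both $J_r^M x$ and all the $\phi$-expressions make sense, while reflexivity and strict convexity (via Theorem \ref{qtt} and the discussion following it) ensure that $J_r^M x$ exists and is the unique solution of $Jx \in Jz + rMz$.
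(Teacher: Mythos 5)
Your proof is correct, and there is nothing in the paper to compare it against: the paper does not prove Lemma \ref{qyy}, but imports it by citation from Chidume--Romanus--Nnyaba, so your argument stands on its own. It is the standard proof, and it uses exactly the toolkit the paper assembles elsewhere: the resolvent inclusion $\tfrac{1}{r}(Jx-Jz)\in Mz$, monotonicity tested against the pair $(y,0)$, and the four-point identity with the arguments placed so that the extraneous term $\phi(z,z)=0$ drops out; the same pattern (monotonicity plus a $\phi$-identity) is what the paper itself uses in the uniqueness discussion following Theorem \ref{qtt}. Two minor remarks. First, your appeal to Theorem \ref{qtt} for well-definedness of $J_r^M x$ is slightly off on hypotheses: that theorem is stated for uniformly convex and smooth spaces, while the lemma assumes only reflexivity, strict convexity and smoothness; in this generality the surjectivity of $J+rM$ is Rockafellar's theorem. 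This does not damage your argument, since the lemma's statement already presupposes that $J_r^M=(J+rM)^{-1}J$ is a well-defined single-valued map. Second, you were right to derive the inequality $\langle J_r^Mx-y,\, Jx-JJ_r^Mx\rangle\geq 0$ from scratch rather than invoke the superficially similar \eqref{ghyyy}, which reads $\langle J_r^Mx-y,\, J(x-J_r^Mx)\rangle\geq 0$: outside Hilbert space $J$ is not additive, so \eqref{ghyyy} cannot be fed into the four-point identity, and your version is the one that works.
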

\begin{definition}\cite{p22}\label{qggh}
   $M$ is called upper semicontinuous if for any closed subset $C$ of $E^*$, $M^{-1}(C)$ is closed.
\end{definition}
\begin{theorem}\cite{p22}\label{qttl}
Let  $M : E \rightarrow 2^{E^*}$ be a maximal monotone operator with $dom(M)=E$. Then, $M$ is upper semicontinuous.
\end{theorem}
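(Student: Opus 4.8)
The plan is to verify Definition \ref{qggh} directly, and since $E$ is a metric space the closedness of $M^{-1}(C)$ may be tested sequentially. Fix a closed set $C\subseteq E^*$, take a sequence $x_n\in M^{-1}(C)$ with $x_n\to x_0$ in $E$, and for each $n$ choose a witness $x_n^*\in Mx_n\cap C$. It then suffices to produce a single point $x_0^*\in Mx_0\cap C$, for then $x_0\in M^{-1}(C)$ and the preimage is closed. Everything hinges on extracting from $\{x_n^*\}$ a limit that simultaneously lies in $Mx_0$ (via monotonicity and maximality) and in $C$ (via closedness of $C$).

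The first and decisive step is the boundedness of $\{x_n^*\}$, and this is exactly where the hypothesis $\dom{M}=E$ enters: every point of $E$ is interior to $\dom{M}$, and a monotone operator is locally bounded at each interior point of its domain. Hence there are a neighborhood $U$ of $x_0$ and a constant $K$ with $\norm{y^*}\le K$ whenever $y\in U$ and $y^*\in My$; as $x_n\to x_0$ we eventually have $x_n\in U$, so $\norm{x_n^*}\le K$. Since $E$ is uniformly convex, hence reflexive, the bounded sequence $\{x_n^*\}$ has a subsequence $x_{n_k}^*\rightharpoonup x_0^*$. I expect this local boundedness to be the main obstacle: it is not a formal consequence of the definitions but a genuine theorem, proved by a Baire-category argument on the sets where the monotonicity inequality stays bounded.

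To identify $x_0^*$, note that for every $(y,y^*)$ in the graph of $M$ monotonicity gives $\langle x_{n_k}-y,\,x_{n_k}^*-y^*\rangle\ge 0$. Passing to the limit, using $x_{n_k}\to x_0$ in norm and $x_{n_k}^*\rightharpoonup x_0^*$, together with $\langle x_{n_k},x_{n_k}^*\rangle\to\langle x_0,x_0^*\rangle$ (one factor converges in norm while the other is bounded and weakly convergent), yields $\langle x_0-y,\,x_0^*-y^*\rangle\ge 0$ for all $(y,y^*)$ in the graph. By maximality, adjoining $(x_0,x_0^*)$ to the graph cannot produce a monotone set strictly larger than the graph, so $(x_0,x_0^*)$ already lies in it, i.e. $x_0^*\in Mx_0$; equivalently one may phrase this through the range condition $R(J+rM)=E^*$ of Theorem \ref{qtt}.

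It remains to check $x_0^*\in C$. Since $x_{n_k}^*\in C$ and $x_{n_k}^*\rightharpoonup x_0^*$, this holds provided $C$ is weakly (equivalently, by reflexivity, weak$^*$) closed. This is the one point requiring care: the inclusion uses weak closedness rather than mere norm closedness, so the upper semicontinuity of Definition \ref{qggh} is naturally read in the norm-to-weak sense. With that reading the three ingredients—local boundedness forced by $\dom{M}=E$, weak compactness of bounded sets from reflexivity, and demiclosedness of the graph from maximality—combine to give $x_0^*\in Mx_0\cap C$, hence $x_0\in M^{-1}(C)$ and $M^{-1}(C)$ is closed.
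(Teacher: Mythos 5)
The paper offers no proof of this theorem: it is quoted from the reference \cite{p22}, so your argument can only be measured against the statement itself. Your overall route is the standard one, and its core steps are sound: Rockafellar's local boundedness theorem is indeed where $\mathrm{dom}(M)=E$ enters (every point is interior to the domain, so the witnesses $x_n^*\in Mx_n\cap C$ form a bounded sequence); the limit identification $\langle x_{n_k},x_{n_k}^*\rangle\to\langle x_0,x_0^*\rangle$ is justified exactly as you say; and maximality does convert the limiting monotonicity inequality into $x_0^*\in Mx_0$.

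The gap is the step you flag yourself, and it is genuine, not a matter of ``reading''. Definition \ref{qggh} quantifies over all closed subsets $C$ of $E^*$, i.e.\ norm-closed sets, whereas your argument places the weak limit $x_0^*$ in $C$ only when $C$ is weakly closed; a norm-closed set that is not convex need not contain weak limits of its sequences, so what you have proved is norm-to-weak upper semicontinuity, a strictly weaker assertion than the theorem as written. Moreover, this cannot be repaired by a cleverer extraction: with the norm reading, the theorem is false in infinite dimensions. For a single-valued operator, Definition \ref{qggh} is precisely norm-to-norm continuity; but the duality map $J$ of a smooth Banach space is single-valued, maximal monotone and everywhere defined, and it is norm-to-norm continuous at a point only where the norm is Fr\'echet differentiable --- so a Gateaux-but-not-Fr\'echet differentiable norm (such norms exist, e.g.\ smooth renormings of $\ell^1$, which admits no everywhere Fr\'echet differentiable norm) yields a counterexample. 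Hence either the closedness in Definition \ref{qggh} must be understood in the weak$^*$ topology, in which case your proof is essentially complete and you should state explicitly that this weaker version is what is being proved, or the statement cannot be proved at all. A secondary slip: you invoke uniform convexity, hence reflexivity, of $E$, which is not a hypothesis of this theorem; in a general Banach space one must replace the weakly convergent subsequence by a weak$^*$ convergent subnet via Banach--Alaoglu, which your pairing computation and the maximality step both tolerate.
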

 A Banach space $E$ is said to satisfy the Opial condition, if whenever a sequence $\lbrace x_n\rbrace$ in $E$ converges weakly to $x_0\in E$, then
\begin{equation}
\liminf_{n\rightarrow \infty}\Vert x_n-x_0\Vert <\liminf_{n\rightarrow \infty}\Vert x_n-x\Vert,\quad \forall x\in E , x\neq x_0
\end{equation}
\begin{definition}\label{quu}
Let $C$ be a nonempty convex subset of a Banach space, $\{T_{i}\}_{i \in \mathbb{N}}$ a sequence of nonexpansive mappings of $C $ into itself
and $\{\lambda_{i}\}$ a real sequence such that $0\leq\lambda_{i}\leq 1$ for every $i\in \mathbb{N}$. Following {\rm \cite{p18}}, for any $n \geq1$, we define a mapping\;$ W_{n}$
of $C $ into itself as follows,
\begin{align}
&U_{n,n+1}:=I,\nonumber\\
&U_{n,n}:=\lambda_{n}T_{n}U_{n,n+1}+(1-\lambda_{n})I,\nonumber\\
&\:\;\vdots\nonumber\\
&U_{n,k}:=\lambda_{k}T_{k}U_{n,k+1}+(1-\lambda_{k})I,\label{3}\\
&\:\;\vdots\nonumber\\
&U_{n,2}:=\lambda_{2}T_{2}U_{n,3}+(1-\lambda_{2})I,\nonumber\\
W_{n}:=&U_{n,1}:=\lambda_{1}T_{1}U_{n,2}+(1-\lambda_{1})I,\nonumber
\end{align}
\end{definition}
The following results hold for the mappings $ W_{n}$.
\begin{theorem}\cite{p18}\label{2.8}
Let $C$ be a nonempty closed convex subset of a strictly convex Banach space. Let $\{T_{i}\}_{i \in \mathbb{N}} $ be a sequence of
nonexpansive mappings of $C$ into itself such that $\bigcap_{i=1}^{\infty} \rm{Fix}(T_{i})\neq\emptyset$ and let $\{\lambda_{i}\}$ be a real sequence such that $ 0\leq \lambda_{i}\leq b < 1 $ for every $ i \in \mathbb{N}$. For any $n\in \mathbb{N}$, let $W_n$ be the $W$-mapping of $C$ into itself generated by $T_n, T_{n-1}, . . . , T_1$ and $\lambda_n, \lambda_{n-1}, . . . , \lambda_1$. Then
 \begin{itemize}
\item [{\rm(1)}] $ W_{n}$ is asymptotically regular and nonexpansive and $ \rm{Fix}(W_{n})=\bigcap_{i=1}^{n}\rm{Fix}(T_{i})$, for all $n \in \mathbb{N}$.
\item [{\rm(2)}] for every $x \in C $ and for each positive integer $j$, $\displaystyle\lim_{n\rightarrow\infty} U_{n,j}x$ exists.
\item [{\rm(3)}] The mapping $ W: C\rightarrow C$ defined by
$Wx:=\displaystyle\lim_{n\rightarrow\infty}W_{n}x=\displaystyle\lim_{n\rightarrow\infty}U_{n,1}x$, for every $x\in C,$
is a nonexpansive mapping satisfying $\rm{Fix}(W)=\bigcap_{i=1}^{\infty}\rm{Fix}(T_{i})$ and it is called the $W$-mapping generated by $\{T_{i}\}_{i\in\mathbb{N}}$, and $\{\lambda_{i}\}_{i\in\mathbb{N}}$.
 \end{itemize}
\end{theorem}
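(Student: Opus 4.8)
The plan is to exploit the recursive definition~(\ref{3}), which is tailor-made for backward induction on the index $k$, and to isolate the single place where strict convexity of the norm is genuinely used. For the first assertion I would start with nonexpansiveness: $U_{n,n+1}=I$ is nonexpansive, and if $U_{n,k+1}$ is nonexpansive then so is the composition $T_kU_{n,k+1}$, hence so is the convex combination $U_{n,k}=\lambda_kT_kU_{n,k+1}+(1-\lambda_k)I$; backward induction gives that $W_n=U_{n,1}$ is nonexpansive. The inclusion $\bigcap_{i=1}^n\mathrm{Fix}(T_i)\subseteq\mathrm{Fix}(W_n)$ is immediate, since $T_ip=p$ for $i\le n$ forces $U_{n,k}p=p$ for every $k$ and thus $W_np=p$. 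Asymptotic regularity I would read off from the shape $W_n=\lambda_1S+(1-\lambda_1)I$ with $S:=T_1U_{n,2}$ nonexpansive and $\mathrm{Fix}(W_n)\ne\emptyset$: the orbit $x_{m+1}=W_nx_m$ is a Krasnoselskii--Mann iteration with constant relaxation parameter $\lambda_1<1$ and bounded range, so the classical asymptotic-regularity theorem for averaged nonexpansive maps yields $\|W_n^{m+1}x-W_n^mx\|\to0$.

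The reverse inclusion $\mathrm{Fix}(W_n)\subseteq\bigcap_{i=1}^n\mathrm{Fix}(T_i)$ is the crux, and the only step that uses strict convexity. Fix $p\in\bigcap_{i=1}^n\mathrm{Fix}(T_i)$ and suppose $W_nx=x$ with $x\ne p$. I would prove by induction on $k=1,\dots,n$ that $\|U_{n,k}x-p\|=\|x-p\|$ and $T_kU_{n,k+1}x=x$, the base case $\|U_{n,1}x-p\|=\|W_nx-p\|=\|x-p\|$ being the hypothesis. Using the identity $U_{n,k}x-p=\lambda_k(T_kU_{n,k+1}x-p)+(1-\lambda_k)(x-p)$ together with $\|T_kU_{n,k+1}x-p\|\le\|U_{n,k+1}x-p\|\le\|x-p\|$, the equality $\|U_{n,k}x-p\|=\|x-p\|$ forces every inequality in the resulting chain to be an equality; in particular both $T_kU_{n,k+1}x-p$ and $x-p$ have norm $\|x-p\|$ while their convex combination does too, so strict convexity gives $T_kU_{n,k+1}x-p=x-p$, that is $T_kU_{n,k+1}x=x$, and simultaneously $\|U_{n,k+1}x-p\|=\|x-p\|$, which feeds the next step. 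Once $T_kU_{n,k+1}x=x$ is known for every $k\le n$, substituting back into the recursion gives $U_{n,k}x=\lambda_kx+(1-\lambda_k)x=x$, whence $T_kx=T_kU_{n,k+1}x=x$ for each $k$.

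For the second assertion, fix $x$, $j$ and a common fixed point $p$; then $U_{n,j}p=p$ and nonexpansiveness bound $\{U_{n,j}x\}_n$ by $\|x-p\|$. The $(1-\lambda_k)I$ terms cancel in the difference, giving $U_{n+1,j}x-U_{n,j}x=\lambda_j\bigl(T_jU_{n+1,j+1}x-T_jU_{n,j+1}x\bigr)$, and iterating this estimate over $k=j,\dots,n$ yields $\|U_{n+1,j}x-U_{n,j}x\|\le\bigl(\prod_{i=j}^n\lambda_i\bigr)\|U_{n+1,n+1}x-x\|\le 2\|x-p\|\,b^{\,n-j+1}$, using $\lambda_i\le b<1$ and $\|U_{n+1,n+1}x-x\|\le 2\|x-p\|$. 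These differences are summable, so $\{U_{n,j}x\}_n$ is Cauchy and converges, which is (2). Finally, for (3) I would set $Wx:=\lim_nU_{n,1}x$ and $U_{\infty,k}x:=\lim_nU_{n,k}x$ (both exist by (2)); $W$ is nonexpansive as a pointwise limit of nonexpansive maps, and letting $n\to\infty$ in the recursion, with $T_k$ continuous, gives $U_{\infty,k}=\lambda_kT_kU_{\infty,k+1}+(1-\lambda_k)I$. The inclusion $\bigcap_i\mathrm{Fix}(T_i)\subseteq\mathrm{Fix}(W)$ is clear, and for the converse I would rerun the induction of the previous paragraph verbatim with $U_{\infty,k}$ in place of $U_{n,k}$, obtaining $T_kU_{\infty,k+1}x=x$ and $\|U_{\infty,k+1}x-p\|=\|x-p\|$ for all $k\ge1$, and then $U_{\infty,k}x=x$ and $T_kx=x$ for every $k$. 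The one difference from the finite case is that the induction has no terminal index, but since $T_kU_{\infty,k+1}x=x$ is secured for all $k$ at once, the identity $U_{\infty,k}x=x$ still closes the argument and gives $\mathrm{Fix}(W)=\bigcap_{i=1}^\infty\mathrm{Fix}(T_i)$. The main obstacle throughout is the strict-convexity step of the second paragraph; everything else is backward induction and a geometric estimate.
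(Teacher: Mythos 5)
Your proposal is correct, but note that the paper itself contains no proof of this statement: Theorem \ref{2.8} is imported with a citation to Shimoji and Takahashi \cite{p18}, and your argument is essentially a reconstruction of the standard proof from that source --- backward induction for nonexpansiveness, the strict-convexity equality argument for $\mathrm{Fix}(W_n)=\bigcap_{i=1}^{n}\mathrm{Fix}(T_i)$, the summable geometric estimate $\|U_{n+1,j}x-U_{n,j}x\|\le 2\|x-p\|\,b^{\,n-j+1}$ for existence of $\lim_n U_{n,j}x$, and passage to the limit in the recursion to identify $\mathrm{Fix}(W)=\bigcap_{i=1}^{\infty}\mathrm{Fix}(T_i)$. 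Your handling of the infinite case (establishing $T_kU_{\infty,k+1}x=x$ for all $k$ simultaneously and only then substituting back into the recursion) correctly resolves the absence of a terminal index.

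One caveat you should make explicit: extracting $\|T_kU_{n,k+1}x-p\|=\|x-p\|$ from the equality chain, and then invoking strict convexity for a proper convex combination, requires $\lambda_k>0$. The statement as transcribed in the paper allows $0\le\lambda_i\le b<1$, under which the fixed-point identity is actually false (take $\lambda_1=0$: then $W_1=I$ and $\mathrm{Fix}(W_1)=C\neq\mathrm{Fix}(T_1)$ in general); the hypothesis in the cited source, and in Theorem \ref{2.9} of this paper, is $0<\lambda_i\le b<1$, and that is what your proof genuinely uses. With that reading, the remaining steps check out, including the appeal to the classical asymptotic-regularity theorem for averaged nonexpansive maps, which needs exactly $0<\lambda_1<1$ together with $\mathrm{Fix}(W_n)\neq\emptyset$.
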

\begin{theorem}\cite{p15}\label{2.9}
Let $\{T_{i}\}_{i=1}^{\infty} $ be a sequence of nonexpansive mappings of $C$ into itself such that
$\bigcap_{i=1}^{\infty} \rm{Fix}(T_{i})\neq \emptyset,\{\lambda_{i}\}$ be a real sequence such that $0 < \lambda_{i}\leq b < 1, \;( i \geq1)$. If $D$ is any bounded subset
of $ C$, then
$\displaystyle\lim_{n\rightarrow\infty}\sup_{x\in D}\|Wx -W_{n}x\|=0$.
\end{theorem}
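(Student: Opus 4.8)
The plan is to upgrade the pointwise convergence $W_n x \to W x$ already furnished by Theorem \ref{2.8}(3) to convergence that is uniform on the bounded set $D$, by producing a \emph{geometric} bound on $\|W_n x - W_m x\|$ that is independent of $x \in D$ and decays like $b^m$.

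First I fix a point $p \in \bigcap_{i=1}^{\infty}\mathrm{Fix}(T_i)$, which exists by hypothesis, and set $M := \sup_{x \in D}\|x-p\|$, which is finite since $D$ is bounded. A routine induction on $j$, using $T_i p = p$ for every $i$ together with $U_{n,n+1}=I$ and the recursion $U_{n,k}=\lambda_k T_k U_{n,k+1}+(1-\lambda_k)I$, shows that every partial mapping $U_{n,j}$ fixes $p$. Moreover each $U_{n,j}$ is nonexpansive, being built from the averaged nonexpansive maps $\lambda_k T_k(\cdot)+(1-\lambda_k)I$ by composition; indeed if $U_{n,k+1}$ is nonexpansive then $\|U_{n,k}x-U_{n,k}y\|\le \lambda_k\|U_{n,k+1}x-U_{n,k+1}y\|+(1-\lambda_k)\|x-y\|\le \|x-y\|$. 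Consequently $\|U_{n,j}x - p\| = \|U_{n,j}x - U_{n,j}p\|\le \|x-p\|\le M$ for all $x\in D$.

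The key step is the telescoping estimate. Fix integers $n>m\ge 1$. For each index $k$ with $1\le k\le m$ the recursions defining $W_n$ and $W_m$ use the same coefficient $\lambda_k$ and the same map $T_k$, so $U_{n,k}x-U_{m,k}x=\lambda_k\big(T_k U_{n,k+1}x-T_k U_{m,k+1}x\big)$, and nonexpansiveness of $T_k$ gives $\|U_{n,k}x-U_{m,k}x\|\le \lambda_k\|U_{n,k+1}x-U_{m,k+1}x\|$. Iterating from $k=1$ down to $k=m$ and using $U_{m,m+1}=I$ yields $\|W_n x - W_m x\|\le \left(\prod_{k=1}^{m}\lambda_k\right)\|U_{n,m+1}x - x\|\le b^{m}\,\|U_{n,m+1}x - x\|$, since $\lambda_k\le b$ for every $k$. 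The bound of the previous paragraph then gives $\|U_{n,m+1}x - x\|\le \|U_{n,m+1}x-p\|+\|p-x\|\le 2M$, whence $\|W_n x - W_m x\|\le 2M\,b^{m}$ for all $x\in D$ and all $n>m$.

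Finally I let $n\to\infty$ with $m$ fixed: by Theorem \ref{2.8}(3) we have $W_n x \to W x$ for each $x$, so passing to the limit in the last inequality gives $\|W x - W_m x\|\le 2M\,b^{m}$ for every $x\in D$. Taking the supremum over $D$ and then letting $m\to\infty$, the hypothesis $0<b<1$ forces $\sup_{x\in D}\|Wx - W_m x\|\le 2M\,b^{m}\to 0$, which, after renaming the index $m$ as $n$, is exactly the asserted $\lim_{n\to\infty}\sup_{x\in D}\|Wx-W_n x\|=0$. I expect the only delicate point to be the bookkeeping at the index $k=m+1$ where the two recursions first differ—one side collapsing to the identity—since this is precisely where the contractive factor $\prod_{k=1}^{m}\lambda_k\le b^{m}$ is generated; all remaining steps are routine consequences of nonexpansiveness and of the already-established pointwise limit.
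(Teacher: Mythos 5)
Your proof is correct. The paper itself offers no proof of Theorem \ref{2.9} (it is quoted from \cite{p15}), and your argument --- the telescoping estimate $\|W_nx-W_mx\|\le\bigl(\prod_{k=1}^{m}\lambda_k\bigr)\|U_{n,m+1}x-x\|\le 2Mb^{m}$ uniformly on $D$, followed by letting $n\to\infty$ via the pointwise limit from Theorem \ref{2.8}(3) --- is essentially the standard geometric-decay argument used in that reference and in Shimoji--Takahashi \cite{p18}.
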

\begin{lemma}\cite{p3q}\label{muko}
  Let $E$ be a strictly convex Banach space and $C\subseteq E$ be a nonempty and convex subset of $E$. let $T:C\rightarrow E$ be a nonexpansive mapping. Then $Fix(T)$ is convex.
\end{lemma}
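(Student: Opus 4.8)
The plan is to prove convexity of $Fix(T)$ directly from the definition: take two fixed points $x,y\in Fix(T)$ and a parameter $\lambda\in(0,1)$, set $z:=\lambda x+(1-\lambda)y$, and show $Tz=z$. Since $C$ is convex we have $z\in C$, so $Tz$ is well defined, and the cases in which $Fix(T)$ is empty or a single point are trivially convex. The argument naturally separates into a metric estimate supplied by nonexpansiveness and a geometric conclusion supplied by strict convexity of $E$.

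First I would exploit that $T$ is nonexpansive and fixes both $x$ and $y$ to bound the distances from $Tz$ to the two endpoints:
\[
\|Tz-x\|=\|Tz-Tx\|\le\|z-x\|=(1-\lambda)\|x-y\|
\]
and
\[
\|Tz-y\|=\|Tz-Ty\|\le\|z-y\|=\lambda\|x-y\|.
\]
Combining these with the triangle inequality gives
\[
\|x-y\|\le\|x-Tz\|+\|Tz-y\|\le(1-\lambda)\|x-y\|+\lambda\|x-y\|=\|x-y\|,
\]
so every inequality in this chain is in fact an equality. In particular $\|x-Tz\|=(1-\lambda)\|x-y\|$ and $\|Tz-y\|=\lambda\|x-y\|$; because $\lambda\in(0,1)$ and $x\neq y$, both quantities are strictly positive, so the vectors $u:=x-Tz$ and $v:=Tz-y$ are nonzero and satisfy the equality $\|u+v\|=\|u\|+\|v\|$.

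The decisive step is to invoke strict convexity of $E$, through the standard characterization that $\|u+v\|=\|u\|+\|v\|$ with $u,v\neq0$ forces $u$ and $v$ to be positively proportional, say $u=s\,v$ with $s>0$. (Geometrically, dividing by $\|u\|+\|v\|$ exhibits a unit-norm proper convex combination of $u/\|u\|$ and $v/\|v\|$, which a strictly convex unit sphere cannot support unless the two normalized vectors coincide.) Solving $x-Tz=s(Tz-y)$ gives $Tz=\tfrac{1}{1+s}x+\tfrac{s}{1+s}y$, so $Tz$ lies on the segment $[x,y]$; comparing $\|Tz-x\|=\tfrac{s}{1+s}\|x-y\|$ with the already established value $(1-\lambda)\|x-y\|$ pins down $\tfrac{s}{1+s}=1-\lambda$, i.e. $\tfrac{1}{1+s}=\lambda$, and hence $Tz=\lambda x+(1-\lambda)y=z$. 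Therefore $z\in Fix(T)$, and $Fix(T)$ is convex.

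The main obstacle is the geometric step, namely justifying the rigidity of equality in the triangle inequality under strict convexity; everything else is a direct consequence of nonexpansiveness together with elementary bookkeeping of the coefficients. I would remark that the positivity of the two distances already guarantees $Tz\neq x$ and $Tz\neq y$, so that $u$ and $v$ are genuinely nonzero and the strict-convexity characterization applies without a degenerate case.
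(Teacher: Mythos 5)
Your proof is correct and complete: using nonexpansiveness to force equality throughout the triangle inequality, and then strict convexity to conclude that $Tz$ lies on the segment $[x,y]$ with exactly the coefficients of $z$, is precisely the classical argument for this fact. Note that the paper itself offers no proof to compare against — it simply cites Browder's 1967 result — so your write-up is in effect supplying the standard proof behind that citation, including the correct handling of the degenerate cases and of the strict-convexity characterization of equality in the triangle inequality.
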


\section{\textbf{Main results}}
\begin{theorem}\label{aaa}
Let $E$ and $F$ be two $2$-uniformly convex and uniformly smooth real Banach spaces that satisfy the Opial condition. Let $J_E$ and $J_F$ be the duality mappings on $E$ and $F$, respectively. Suppose that $C$ is nonempty, closed and convex subset of $E$.
Let $A:E\longrightarrow F$ be a bounded linear operator such that $A\neq0$ with the adjoint operator $ A^*$.
 Let $M_1$ be a maximal monotone operator of $E$ into $2^{E^{*}}$ such that $M_1^{-1}0\neq\emptyset$ and $M_2$ be a maximal monotone operator of $F$ into $2^{ F^{*}}$ such that $M_2^{-1}0\neq\emptyset$. Suppose that $S:C\longrightarrow E$ is a nonexpansive mapping and $\lbrace T_i \rbrace_{i=1}^{\infty} :C\longrightarrow C$, a family of nonexpansive mappings. For every $n\in \mathbb{N}$, let $W_n$ be a $W-mapping$ generated by Definition \ref{quu}. Let $J_{\lambda}^{M_1}$ and $Q_{\mu}^{M_2}$ be the generalized resolvents of $M_1$  and $M_2$ for $\lambda>0 $ and $\mu>0$, respectively. Suppose that $M_1^{-1}0 \cap A^{-1}(M_2^{-1}0)\subseteq C$. Let $x_1\in C$ and let $\lbrace x_{n}\rbrace$, $\lbrace u_{n}\rbrace$ and $\lbrace y_{n}\rbrace$ be the sequences generated by
  \begin{align}\label{mnasd}
  \begin{cases}
  u_n=J_{E}^{-1}\Big((1-\alpha_n)J_{E}x_n+\alpha_nJ_{E}\Pi_{C}J_{E}^{-1}(\sigma_nJ_{E}W_n x_n+(1-\sigma_n)J_{E}Sx_n)\Big);\\
  z_n=J_{\lambda_n}^{M_1}(u_n+e_n);\quad \quad \omega_n=Q_{\mu_n}^{M_2}(Az_n);\\
  y_n =\Pi_C J_{E}^{-1}\Big(J_{E}z_n-\gamma A^*J_{F}(Az_n-\omega_n)\Big);\\
  C_n =\lbrace z\in C,\big\langle \omega_n-Az,J_{F}(Az_n-\omega_n)\big\rangle\geq 0\rbrace;\\
  D_n =\lbrace z\in E,\phi_{E}(z,z_n)\leq \phi_{E}(z,u_n+e_n)\rbrace;\\
  Q_n =\lbrace z\in E,\langle x_n-z,J_{E}x_1-J_{E}x_n\rangle\geq 0\rbrace;\\
  x_{n+1}=\Pi_{C_n\cap Q_n\cap D_n}x_1,\quad \forall n\in \mathbb{N}.
  \end{cases}
  \end{align}
 where $J_{\lambda_n}^{M_1}=(J_{E}+\lambda_n M_1)^{-1}J_{E}$ and $Q_{\mu_n}^{M_2}=(J_{F}+\mu_n M_2)^{-1}J_{F}$ such that $\lbrace\lambda_n\rbrace,\lbrace\mu_n\rbrace \subseteq(0,\infty)$ and $a\in \mathbb{R}$ satisfy in $0<a\leq \lambda_n,\mu_n,\forall n\in \mathbb{N}$ and $0<\gamma<\frac{2}{c\parallel A\parallel^{2}}$ with $c>0$.
   Let $\lbrace\alpha_n\rbrace,\lbrace\sigma_n\rbrace$ be real sequences in $(0,1)$ satisfied in the conditions:
  \begin{itemize}
  \item [{\rm(i)}] $\displaystyle\lim_{ n\rightarrow \infty}\alpha_n = 0$;
  \item [{\rm(ii)}] $\displaystyle\lim_{n\rightarrow \infty}\dfrac{\Vert J_Ex_n-J_E u_n\Vert}{\alpha_n}=0$;
  \item [{\rm(iii)}] $\displaystyle\lim_{n\rightarrow \infty}\sigma_n =1$.
  \end{itemize}
   Consider the error sequence $\lbrace e_n\rbrace \subseteq E$ such that
  \begin{itemize}
    \item [{\rm(iv)}] $\displaystyle\lim_{n\rightarrow \infty}\Vert e_n\Vert =0$.
  \end{itemize}
   Let $\Omega = M_{1}^{-1} 0 \cap A^{-1}(M_{2}^{-1} 0 )\cap\big(\bigcap_{i=1}^{\infty}Fix(T_i)\big)$.
  Suppose that one of the following two conditions   holds:
  \begin{itemize}
    \item [(v)] the sequence $\{x_n\}$ is bounded,\\
    or
    \item [(vi)] $\Omega \neq \emptyset$.
  \end{itemize}
  Then
  \begin{enumerate}
    \item [(a)]$\Omega \neq \emptyset$ if and only if the sequence $\{x_n\}$ is bounded,
    \item [(b)]the sequence $\lbrace x_n\rbrace$ converges strongly to a point $\omega_0\in \Omega$ where $\omega_0 =\Pi_\Omega x_1$.
  \end{enumerate}
  \end{theorem}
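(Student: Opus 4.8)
The plan is to run the standard hybrid (shrinking-type) projection analysis, phrased throughout with the Lyapunov functional $\phi$, in four stages: establish well-definedness together with the inclusion $\Omega\subseteq C_n\cap Q_n\cap D_n$; derive boundedness and the basic asymptotic-regularity estimates; identify every weak subsequential limit as a point of $\Omega$; and finally upgrade weak to strong convergence while pinning the limit to $\Pi_\Omega x_1$. For the first stage I would argue by induction that $\Omega\subseteq C_n\cap Q_n\cap D_n$ for all $n$. The inclusion $\Omega\subseteq D_n$ is Lemma~\ref{qyy} for $M_1$ applied at $x=u_n+e_n$; the inclusion $\Omega\subseteq C_n$ follows from the resolvent inequality \eqref{ghyyy} for $Q_{\mu_n}^{M_2}$ at the point $Az_n$ together with $Az\in M_2^{-1}0$ for every $z\in\Omega$ (and $\Omega\subseteq C$ by hypothesis); and $\Omega\subseteq Q_n$ follows from Lemma~\ref{qrr}(2) applied to $x_n=\Pi_{C_{n-1}\cap Q_{n-1}\cap D_{n-1}}x_1$, which says precisely that $C_{n-1}\cap Q_{n-1}\cap D_{n-1}\subseteq Q_n$. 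Since $C_n,Q_n$ are half-spaces in the duality pairing and $D_n$ is convex (expand $\phi$ linearly in $J_E z$), each is closed and convex, so $x_{n+1}=\Pi_{C_n\cap Q_n\cap D_n}x_1$ is well defined whenever the intersection is nonempty, in particular under (vi).

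For the second stage, from $x_{n+1}\in Q_n$ and the four-point identity for $\phi$ one obtains $\phi(x_{n+1},x_1)\ge\phi(x_n,x_1)+\phi(x_{n+1},x_n)$, so $\{\phi(x_n,x_1)\}$ is nondecreasing. If $\Omega\neq\emptyset$, then Lemma~\ref{qrr}(1) with $\omega_0:=\Pi_\Omega x_1\in\Omega\subseteq C_n\cap Q_n\cap D_n$ gives $\phi(x_n,x_1)\le\phi(\omega_0,x_1)$, so $\{x_n\}$ is bounded by \eqref{iokl}; conversely, if $\{x_n\}$ is bounded the nondecreasing sequence $\{\phi(x_n,x_1)\}$ is bounded, hence convergent, and stage three will exhibit a point of $\Omega$. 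This is exactly how I would prove the two implications of (a). In either case $\phi(x_{n+1},x_n)\to0$, so by Lemma~\ref{qee} and uniform norm-to-norm continuity of $J_E$ one has $x_{n+1}-x_n\to0$ and $J_Ex_{n+1}-J_Ex_n\to0$. Conditions (i)–(ii) give $\|J_Ex_n-J_Eu_n\|\to0$, hence $u_n-x_n\to0$; then $x_{n+1}\in D_n$ yields $\phi(x_{n+1},z_n)\le\phi(x_{n+1},u_n+e_n)\to0$ (the right side tends to $0$ since $x_{n+1}-(u_n+e_n)\to0$ by (iv)), so $z_n-x_{n+1}\to0$ and therefore $z_n-x_n\to0$, $z_n-u_n\to0$. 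Expanding the defining inequality of $C_n$ at $z=x_{n+1}$ and using $\langle Az_n-\omega_n,J_F(Az_n-\omega_n)\rangle=\|Az_n-\omega_n\|^2$ gives $\|Az_n-\omega_n\|\le\|A\|\,\|z_n-x_{n+1}\|\to0$.

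The third stage carries the weight. Fix a subsequence with $x_{n_k}\rightharpoonup w$ (available by reflexivity and boundedness); the estimates above give $z_{n_k}\rightharpoonup w$, $u_{n_k}\rightharpoonup w$, $Az_{n_k}\rightharpoonup Aw$ and $\omega_{n_k}\rightharpoonup Aw$. From $J_E(u_n+e_n)-J_Ez_n\in\lambda_nM_1z_n$ with $\lambda_n\ge a>0$ and $u_n+e_n-z_n\to0$ (so the left side tends to $0$), the weak–strong closedness of the graph of the maximal monotone $M_1$ gives $0\in M_1w$; the identical argument for $Q_{\mu_n}^{M_2}$ with $\mu_n\ge a>0$ gives $0\in M_2(Aw)$, so $w\in M_1^{-1}0\cap A^{-1}(M_2^{-1}0)$. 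The remaining and hardest point is $w\in\bigcap_iFix(T_i)$. Writing $J_Eu_n=(1-\alpha_n)J_Ex_n+\alpha_nJ_Ev_n$ with $v_n=\Pi_CJ_E^{-1}(\sigma_nJ_EW_nx_n+(1-\sigma_n)J_ESx_n)$, one has $J_Ev_n-J_Ex_n=\alpha_n^{-1}(J_Eu_n-J_Ex_n)$, whose norm tends to $0$ by (ii), so $v_n-x_n\to0$. With $t_n:=J_E^{-1}(\sigma_nJ_EW_nx_n+(1-\sigma_n)J_ESx_n)$, condition (iii) gives $J_Et_n-J_EW_nx_n=(1-\sigma_n)(J_ESx_n-J_EW_nx_n)\to0$, hence $t_n-W_nx_n\to0$. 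Since $W_nx_n\in C$, so that $\Pi_CW_nx_n=W_nx_n$, and $\Pi_C$ is uniformly continuous on bounded sets, $v_n-W_nx_n=\Pi_Ct_n-\Pi_CW_nx_n\to0$; combining, $x_n-W_nx_n\to0$. Theorem~\ref{2.9} then gives $\|W_nx_n-Wx_n\|\to0$ on the bounded set $\{x_n\}$, so $x_n-Wx_n\to0$, and the demiclosedness of $I-W$ at $0$, valid because $E$ satisfies the Opial condition and $W$ is nonexpansive, forces $Ww=w$; by Theorem~\ref{2.8}(3) this means $w\in\bigcap_iFix(T_i)$. Hence $w\in\Omega$, which simultaneously completes the converse implication of (a).

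Finally, since $\omega_0=\Pi_\Omega x_1$ minimizes $\phi(\cdot,x_1)$ over $\Omega$ and $w\in\Omega$, while $\phi(x_n,x_1)\le\phi(\omega_0,x_1)$ and $\phi(\cdot,x_1)$ is weakly lower semicontinuous, one gets $\phi(\omega_0,x_1)\le\phi(w,x_1)\le\liminf_k\phi(x_{n_k},x_1)\le\phi(\omega_0,x_1)$, so $w=\omega_0$ by uniqueness of the generalized projection; thus $x_n\rightharpoonup\omega_0$ and $\phi(x_n,x_1)\to\phi(\omega_0,x_1)$. Weak convergence gives $\langle x_n,J_Ex_1\rangle\to\langle\omega_0,J_Ex_1\rangle$, hence $\|x_n\|\to\|\omega_0\|$, and the Kadec–Klee property of the uniformly convex space $E$ yields $x_n\to\omega_0$ strongly, proving (b). I expect stage three to be the main obstacle: deducing $x_n-W_nx_n\to0$ from the mixed $W_n$/$S$ anchor passed through the generalized projection, and then transferring $W_n\rightarrow W$ via Theorem~\ref{2.9} to feed the Opial demiclosedness, is the delicate part; alongside it sits the bookkeeping needed to make every estimate of stage two valid under the single hypothesis of boundedness, so that (a) and (b) follow together under either (v) or (vi).
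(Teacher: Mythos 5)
Your proposal is correct and follows the paper's overall hybrid-projection skeleton (the same four stages: inclusions $\Omega\subseteq C_n\cap D_n\cap Q_n$ and well-definedness, boundedness plus asymptotic regularity, identification of weak cluster points with points of $\Omega$, and the Kadec--Klee endgame), but it deviates from the paper's proof at two substantive points, both to your advantage. First, to get $\Vert Az_n-\omega_n\Vert\to0$ you expand the defining inequality of $C_n$ at $z=x_{n+1}$ and use $\Vert J_F(Az_n-\omega_n)\Vert=\Vert Az_n-\omega_n\Vert$, obtaining $\Vert Az_n-\omega_n\Vert\le\Vert A\Vert\,\Vert z_n-x_{n+1}\Vert\to0$; the paper instead runs the longer estimate \eqref{qvpl} based on Lemma \ref{qww} and the restriction $0<\gamma<2/(c\Vert A\Vert^{2})$. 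Your shortcut is legitimate because the auxiliary sequence $\{y_n\}$ never enters the sets $C_n$, $D_n$, $Q_n$, so Lemma \ref{qww} and the bound on $\gamma$ are in fact not needed for the convergence analysis of $\{x_n\}$ at all. Second, in part (b) you identify every weak cluster point $w$ with $\omega_0=\Pi_\Omega x_1$ by minimality of $\phi(\cdot,x_1)$ over $\Omega$ combined with weak lower semicontinuity, and only then invoke Kadec--Klee; the paper instead proves uniqueness of the weak cluster point by an Opial argument whose justification is the claim that \eqref{qss} makes $\{x_n\}$ Cauchy --- which is false as stated (asymptotic regularity does not imply Cauchy), so the existence of the limits $\lim_n\Vert x_n-\bar x\Vert$ and $\lim_n\Vert x_n-x^*\Vert$ used there is unsupported. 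Your route bypasses that flawed step entirely and is therefore more robust, while your final lower-semicontinuity/Kadec--Klee computation coincides with the paper's. The remaining stages (the monotonicity-and-maximality argument giving $w\in M_1^{-1}0\cap A^{-1}(M_2^{-1}0)$, and the derivation of $x_n-W_nx_n\to0$ leading to $w\in\bigcap_{i}Fix(T_i)$) match the paper's, your ``demiclosedness of $I-W$ via Opial'' being exactly the paper's explicit contradiction argument in packaged form. One caveat you share with the paper: both proofs invoke Theorems \ref{2.8} and \ref{2.9} (which presuppose $\bigcap_{i=1}^{\infty}Fix(T_i)\neq\emptyset$ and $0<\lambda_i\le b<1$) and the uniform continuity of $\Pi_C$ on bounded sets, even in the direction of (a) in which $\Omega\neq\emptyset$ is the conclusion rather than a hypothesis; this issue is inherited from the paper, not introduced by you.
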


  \begin{proof}
  (a)  Let $\Omega \neq \emptyset$. First, it will be checked that $C_{n}\cap D_{n}\cap Q_{n}$ is closed and convex for all $ n\in \mathbb{N}$. For any $z\in D_n$, it is realized that
  \begin{align}\label{asgh}
  &\phi_E(z,z_n)\leq \phi_E(z,u_n+e_n)\nonumber\\
  \Leftrightarrow &\| z\|^2+\|z_n\|^2-2\langle z,J_Ez_n\rangle \leq \|z\|^2+\|u_n+e_n\|^2-2\langle z,J_E(u_n+e_n)\rangle \nonumber \\
  \Leftrightarrow &\|u_n+e_n\|^2-\|z_n\|^2+2\langle z,J_Ez_n\rangle -2\langle z,J_E(u_n+e_n)\rangle \geq 0.
  \end{align}
  Because $E$ is a real Banach space, the inner product of $E$ is linear in both components and jointly continuous. Therefore, it is easily  observed from (\ref{asgh}) that $D_n$ is closed and convex for all $n \in \mathbb{N}$. Further, since $A$ is a bounded linear operator, it is obvious that $C_n$ is closed and convex for all $n\in \mathbb{N}$. Also, it is evident that $Q_n$ is closed and convex for all $n \in \mathbb{N}$. Consequently, $C_n\cap D_n\cap Q_n$ is closed and convex for all $n \in \mathbb{N}$.\\
  \indent Now, it will be shown that $ M_{1}^{-1}0\cap A^{-1}(M_{2}^{-1}0)\subseteq C_{n}$ for all $ n\in \mathbb{N}$.
  In fact, since $Q_{\mu_{n}}^{M_{2}}$ be the resolvent of $M_2$, we have from \eqref{ghyyy} for all $z\in A^{-1}(M_2^{-1}0)$ that
  \begin{equation*}
  \langle Q_{\mu_n}^{M_2}Az_n-Az,J_{F}(Az_n-Q_{\mu_n}^{M_2}Az_n)\rangle\geq 0,\quad\forall n\in \mathbb{N},
  \end{equation*}
  then $M_{1}^{-1}0\cap A^{-1}(M_2^{-1}0)\subseteq C_n$ for all $n\in \mathbb{N}$. Next, it will be demonstrated that $M_{1}^{-1}0\cap A^{-1}(M_2^{-1}0)\subseteq D_n$ for all $n\in \mathbb{N}$. Indeed, since $M_1$ is maximal monotone, hence from Lemma \ref{qyy}, it is implied for all $z\in M_1^{-1}0$ and $\lambda_n >0$ that
  \begin{equation*}
  \phi_{E} (z,z_n )=\phi_{E}\big(z,J_{\lambda_n}^{M_1}(u_n+e_n)\big)\leq \phi_{E}(z, u_n+e_n),
  \quad\forall n\in \mathbb{N},
  \end{equation*}
   hence, $ M_1^{-1}0 \subseteq D_n $ for all $ n\in \mathbb{N} $, and therefore  $  M_{1}^{-1}0\cap A^{-1}(M_{2}^{-1}0)\subseteq D_{n} $ for all $ n\in \mathbb{N} $. Now, it will be shown by induction that $  M_{1}^{-1}0\cap A^{-1}(M_{2}^{-1}0)\subseteq Q_{n} $ for all $ n\in \mathbb{N} $. Since  $ \langle x_1 -z , J_{E}x_1 - J_{E}x_1 \rangle\geq 0 $ for all $ z\in E $, it is obvious that  $  M_{1}^{-1}0\cap A^{-1}(M_{2}^{-1}0)\subseteq Q_1 = E $. Suppose that $ M_{1}^{-1}0\cap A^{-1}(M_{2}^{-1}0)\subseteq Q_k $ for some $k\in\mathbb{N}$. Then $  M_{1}^{-1}0\cap A^{-1}(M_{2}^{-1}0)\subseteq C_k\cap D_k  \cap Q_k $. From the fact that $ x_{k+1} = \Pi_{C_k\cap D_K\cap Q_k}x_1 $, it is implied from Lemma \ref{qrr}  that
 \begin{equation*}
 \langle x_{k+1} -z , J_{E} x_1 - J_{E} x_{k+1} \rangle\geq 0,\quad  \forall z \in C_k\cap D_k \cap Q_k .
 \end{equation*}
  Since $  M_{1}^{-1}0\cap A^{-1}(M_{2}^{-1}0)\subseteq C_k\cap D_k \cap Q_k $, it is concluded that
 \begin{equation*}
 \langle x_{k+1} -z , J_{E} x_1 - J_{E} x_{k+1} \rangle\geq 0,\quad \forall z\in M_{1}^{-1}0\cap A^{-1}(M_{2}^{-1}0).
 \end{equation*}
     So $ M_{1}^{-1}0\cap A^{-1}(M_{2}^{-1}0)\subseteq Q_ {k+1} $. Hence by induction, $ M_{1}^{-1}0\cap A^{-1}(M_{2}^{-1}0)\\\subseteq Q_ {n} $ for all $ n\in \mathbb{N} $. Thus
   \begin{equation*}
    M_{1}^{-1}0\cap A^{-1}(M_{2}^{-1}0) \subseteq C_n \cap D_n \cap Q_n,\quad \forall n\in \mathbb{N}.
   \end{equation*}
   Hence $C_n \cap D_n \cap Q_n \neq \emptyset$. This implies that $\lbrace x_n \rbrace $ is well defined.

   By Theorem \ref{qttl} and Definition \ref{qggh}, the set $ M_{1}^{-1}0\cap A^{-1}(M_{2}^{-1}0)$ is closed. Next, we show that $M_{1}^{-1}0$ and $ A^{-1}(M_{2}^{-1}0)$ are convex subsets of $E$. Indeed, it is observed that for all $x_1 , x_2 \in M_{1}^{-1}0$ and for all $t \in [0,1]$
   \begin{align*}
   \langle 0-v , tx_1+(1-t)x_2-u\rangle =&t\langle 0-v , x_1-u\rangle +(1-t)\langle 0-v , x_2-u\rangle \\
    &\geq 0, \quad\forall u \in Dom(M_1) , v \in M_1u,
   \end{align*}
   hence, by the maximal monotonicity of $M_1$, it is implied that $tx_1+(1-t)x_2 \in M_1^{-1}0$. Also, for all $x_1 , x_2 \in A^{-1}(M_2^{-1}0)$ and for all $t \in [0,1]$ we have
   \begin{align*}
   \langle 0-v,tAx_1+(1-t)Ax_2-u\rangle =&t\langle 0-v,Ax_1-u\rangle +(1-t)\langle 0-v,Ax_2-u\rangle \\
   &\geq 0, \quad\forall u \in Dom(M_2) , v \in M_2u,
   \end{align*}
    and since $A$ is linear then by the maximal monotonicity of $M_2$, it is implied that  $tx_1+(1-t)x_2 \in A^{-1}(M_2^{-1}0)$.
    Thus, $M_{1}^{-1}0\cap A^{-1}(M_{2}^{-1}0)$ is a convex subset of $E$ and therefore, by Lemma \ref{muko}, $\Omega$ is closed and convex.

    Since $\Omega$ is a nonempty, closed and convex subset of $E$, there exists $\omega_0 \in \Omega$ such that
   $ \omega_0 =\Pi_\Omega x_1$. Since  $x_{n+1} = \Pi _{C_n \cap D_n \cap Q_n } x_1 $, it is concluded that
   \begin{equation*}
   \phi_E (x_{n+1} , x_1 )\leq \phi_E (y,x_1 ),\quad \forall y\in C_n \cap D_n \cap Q_n,
   \end{equation*}
    and since $\omega_0 \in \Omega \subseteq \ M_{1}^{-1}0\cap A^{-1}(M_{2}^{-1}0) \subseteq C_n \cap D_n \cap Q_n$, it is observed that
   \begin{equation}\label{zzz}
   \phi_E (x_{n+1} , x_1 )\leq \phi_E (\omega_0,x_1 ).
   \end{equation}
    This means that $\lbrace x_n \rbrace $ is bounded.

    Conversely, suppose $\lbrace x_n \rbrace $ is bounded. First, we show that $\displaystyle\lim_{n\rightarrow \infty }\phi_E (x_{n+1},x_n )=0$.
    Since $x_{n+1} \in C_n \cap D_n \cap Q_n $ hence
   \begin{align}\label{qssd}
   \phi_E(x_{n+1} ,x_n)&=\phi_E(x_{n+1},\Pi _{C_{n-1}\cap D_{n-1}\cap Q_{n-1}}x_1)\nonumber\\
   &\leq  \phi _E(x_{n+1},x_1)-\phi_E(\Pi_{C_{n-1}\cap D_{n-1}\cap Q_{n-1}}x_1,x_1)\nonumber\\
   &=\phi_E(x_{n+1},x_1)-\phi_E(x_{n},x_1).
   \end{align}
   and hence
   $
   \phi_E (x_{n} , x_1 ) + \phi_E (x_{n+1} , x_n ) \leq \phi_E (x_{n+1} , x_1 ),
   $
   so
   \begin{equation}\label{qaa}
   \phi_E (x_{n} , x_1 ) \leq \phi_E (x_{n+1} , x_1 ),
   \end{equation}
   therefore, it is concluded from \eqref{iokl}, \eqref{qaa} that $ \lbrace \phi _E (x_n , x_1 ) \rbrace $ is bounded and nondecreasing.
   Then, there exists the limit of $ \lbrace \phi _E (x_n , x_1 ) \rbrace. $ Using (\ref{qssd}), it is realized that
   \begin{equation}\label{tyop}
   \lim_{n\longrightarrow \infty } \phi_E (x_{n+1} , x_n )=0.
   \end{equation}
   From Lemma \ref{qee}, it is implied that
   \begin{equation}\label{qss}
   \lim_{n\longrightarrow \infty } \Vert x_{n+1} - x_n \Vert =0 .
   \end{equation}

    Next, we show that $\displaystyle\lim_{n\rightarrow\infty}\phi_E(x_{n+1},u_n)=0$. From the inequality \eqref{whyu}, it is implied that
    \begin{align*}
    \phi_E ( x_{n+1},u_n)=&\phi_E \big(x_{n+1},J_{E}^{-1}\big((1-\alpha_n)J_{E}x_n\\&+\alpha_n J_{E}\Pi_{C} J_{E}^{-1}(\sigma_n J_{E}W_n x_n+ (1-\sigma_n )J_{E}Sx_n)\big)\big)\\
    \leq &(1-\alpha_n )\phi_E( x_{n+1},x_n)\\&+\alpha_n\phi_E\big(x_{n+1} , \Pi_{C} J_{E}^{-1}(\sigma_n J_{E}W_nx_n+(1-\sigma_n)J_{E}Sx_n)\big)\\
    \leq &(1-\alpha_n )\phi_E ( x_{n+1},x_n)\\&+\alpha_n\phi_E\big(x_{n+1} , J_{E}^{-1}(\sigma_n J_{E}W_n x_n+(1-\sigma_n )J_{E}Sx_n)\big)\\
     \leq &(1-\alpha_n )\phi_E ( x_{n+1},x_n)\\&+\alpha_n \sigma_n\phi_E ( x_{n+1},W_n x_n)+\alpha_n(1-\sigma_n )\phi_E ( x_{n+1},Sx_n),
    \end{align*}
     and since from \eqref{iokl}, $\lbrace\phi_E(x_{n+1},W_n x_n)\rbrace$ and $\lbrace\phi_E(x_{n+1},Sx_n)\rbrace $ are bounded. hence, using $ (i)$ and \eqref{tyop},  it is concluded that
  \begin{equation}\label{qgg}
  \lim_{n\rightarrow\infty}\phi_E(x_{n+1},u_n) =0.
  \end{equation}
  Therefore, it is realized from Lemma \ref{qee} that
  \begin{equation}\label{qdd}
  \lim_{n\rightarrow\infty} \Vert x_{n+1}-u_n\Vert =0.
 \end{equation}
 Hence, it is followed from (\ref{qss}) and (\ref{qdd}) that
 \begin{equation}\label{qff}
 \lim_{n\rightarrow\infty} \Vert x_n-u_n\Vert =0.
 \end{equation}
 Furthermore, since $E$ is uniformly smooth and $J_E$ is uniformly continuous, it is implied from \eqref{qff} that
 \begin{equation}
 \lim_{n\rightarrow\infty} \Vert J_E x_n-J_E u_n\Vert =0.
 \end{equation}

 Also, since $x_{n+1}\in D_n$ then
 \begin{equation}\label{qhh}
 \phi_E(x_{n+1},z_n)\leq \phi_E(x_{n+1},u_n+e_n)
 \end{equation}
 Notice that
 \begin{align}
 \phi_E(x_{n+1},u_n+e_n)-\phi_E(x_{n+1},u_n)=&\Vert u_n+e_n\Vert -\Vert u_n\Vert\nonumber \\
 &+2\langle x_{n+1},J_Eu_n-J_E(u_n+e_n)\rangle
 \end{align}
 Since $J_E$ is uniformly continuous on each bounded subset of $E$ and $\displaystyle\lim_{n\rightarrow \infty}\Vert e_n\Vert =0$, we know from (\ref{qgg}) and (\ref{qhh}) that $\displaystyle\lim_{n\rightarrow \infty}\phi_E(x_{n+1},u_n+e_n)=0$, which implies that
 \begin{equation}\label{qjj}
 \lim_{n\rightarrow \infty}\phi_E(x_{n+1},z_n)=0.
 \end{equation}
 Therefore, it is implied from Lemma \ref{qee} that
  \begin{equation}\label{qkk}
  \lim_{n\rightarrow\infty} \Vert x_{n+1}-z_n\Vert =0.
 \end{equation}
 Hence, it is followed from \eqref{qss} and \eqref{qkk} that
 \begin{equation}\label{qxx}
 \lim_{n\rightarrow\infty} \Vert x_n-z_n\Vert =0.
 \end{equation}
  From \eqref{qff} and \eqref{qxx} it is implied that
 \begin{equation}\label{qcc}
 \lim_{n\rightarrow\infty}\Vert u_n-z_n\Vert =0.
 \end{equation}

 Let $ z\in C_n \cap D_n \cap Q_n$. Using Lemma \ref{qww}, it is concluded that
     \begin{align}\label{qvpl}
     \phi_E(z,y_n )=&\phi_E\big(z,\Pi_C J_{E}^{-1} \big(J_{E}z_n -\gamma A^*J_{F}(Az_n -\omega_n)\big)\big)\nonumber\\
      \leq &\phi_E \big(z , J_{E}^{-1}\big(J_{E}z_n -\gamma A^*J_{F}(Az_n -\omega_n)\big)\big)\nonumber\\
      =&\Vert z\Vert^{2}+\Vert J_{E}z_n-\gamma A^*J_{F}(Az_n -\omega_n )\Vert^{2}\nonumber\\
      &-2\langle z,J_{E}z_n-\gamma A^*J_{F}(Az_n -\omega_n)\rangle \nonumber\\
      =&\Vert z\Vert^{2}+\Vert J_{E}z_n-\gamma A^*J_{F}(Az_n-\omega_n)\Vert^{2}-2\langle z,J_Ez_n \rangle \nonumber\\
      &+2\gamma \langle z , A^*J_{F}(Az_n -\omega_n)\rangle \nonumber\\
     \leq &\Vert z\Vert^{2}+\Vert J_E z_n\Vert^{2}-2\gamma \langle Az_n,J_F(Az_n -\omega_n)\rangle\nonumber\\
     &+c\gamma^{2}\Vert A\Vert^{2}\Vert Az_n -\omega_n\Vert^2 \notag -2\langle z,J_Ez_n\rangle+2\gamma \langle Az,J_F(Az_n-\omega_n)\rangle \notag \nonumber\\
     =&\phi_E(z,z_n)-2\gamma \langle Az_n-Az,J_F(Az_n-\omega_n)\rangle\nonumber\\
     &+c\gamma^{2}\Vert A\Vert^{2}\Vert Az_n-\omega_n\Vert^{2} \notag\\
     =&\phi_E(z,z_n)-2\gamma \langle Az_n-\omega_n,J_F(Az_n-\omega_n)\rangle \notag\\
     &-2\gamma \langle \omega_n-Az,J_F(Az_n-\omega_n)\rangle+c\gamma^{2} \Vert A\Vert^{2}\Vert Az_n-\omega_n\Vert^{2} \notag \nonumber\\
     =&\phi_E(z,z_n)-2\gamma \Vert Az_n-\omega_n\Vert^{2}-2\gamma\langle\omega_n-Az,J_F(Az_n-\omega_n)\rangle \notag \nonumber\\
     &+c\gamma^{2}\Vert A\Vert^{2}\Vert Az_n-\omega_n\Vert^{2} \notag \nonumber\\
     \leq &\phi_E(z,z_n)-\gamma (2-c\gamma\Vert A\Vert^{2})\Vert Az_n-\omega_n \Vert^{2},
    \end{align}
    hence, it is implied from the condition $ 0< \gamma < \dfrac{2}{c \Vert A \Vert ^{2}}$ that
     \begin{equation}\label{qvv}
     \phi_E(z,y_n )\leq \phi_E(z,z_n).
     \end{equation}
 Also, using \eqref{qvv} and \eqref{qjj}, it is concluded that
 \begin{equation}\label{qzz}
 \lim_{n\rightarrow\infty}\phi_E(x_{n+1},y_n) =0.
 \end{equation}
 Therefore, it is followed from \eqref{qss}, \eqref{qzz} and Lemma \ref{qee} that
 \begin{equation}\label{qbb}
 \lim_{n\rightarrow\infty}\Vert x_n-y_n\Vert =0.
 \end{equation}

 Next, it is evaluated that $\displaystyle\lim_{n\rightarrow\infty}\Vert Az_n -\omega_n\Vert =0.$ Indeed, putting $z=x_{n+1}$, from (\ref{qvpl}), it is observed  that
 \begin{equation}
 \gamma (2-c\gamma \Vert A\Vert^{2})\Vert Az_n -\omega_n\Vert^{2}\leq \phi_E(x_{n+1},z_n)-\phi_E(x_{n+1} ,y_n)\leq  \phi_E(x_{n+1},z_n),
 \end{equation}
 for all $ n\in \mathbb{N} $. Since $ 0<\gamma<\frac{2}{c\parallel A\parallel^{2}}$, it is concluded from \eqref{qjj} that
 \begin{equation}\label{qnn}
 \lim_{n\rightarrow\infty}\Vert Az_n-\omega_n\Vert =0.
 \end{equation}
 Since $\lbrace x_n \rbrace $ is bounded, from \eqref{qff}, \eqref{qxx}, \eqref{qbb} and \eqref{qnn}, the sequences
 $\lbrace u_n\rbrace,$ $\lbrace z_n\rbrace,$ $\lbrace y_n\rbrace $ and $\lbrace \omega_n\rbrace $ are bounded.

  From the condition (iii) and the fact that $\{\Vert J_E Sx_n-J_E W_n x_n\Vert \}$ is bounded, it is observed that
 \begin{align*}
 \lim_{n\rightarrow\infty} \Vert &\big(\sigma_n J_E W_n x_n+(1- \sigma_n)J_E Sx_n \big)-J_E W_n x_n \Vert \nonumber\\
 &= \lim_{n\rightarrow\infty} \Vert (1- \sigma_n)J_E Sx_n-(1- \sigma_n)J_E W_n x_n\Vert \nonumber\\
  &= \lim_{n\rightarrow\infty} (1- \sigma_n)\Vert J_E Sx_n-J_E W_n x_n\Vert =0,
 \end{align*}
 hence, from the continuity of $J_E\Pi_CJ_E^{-1}$, it is understood that
 \begin{align}\label{wrr}
 &\lim_{n\rightarrow\infty}\Vert J_E\Pi_C J_E^{-1}\big(\sigma_nJ_E W_n x_n+(1-\sigma_n)J_ESx_n \big)-J_E\Pi_CJ_E^{-1}(J_E W_n x_n)\Vert\nonumber\\
 &=\lim_{n\rightarrow\infty} \Vert J_E \Pi_C J_E^{-1} \big(\sigma_n J_E W_n x_n+(1- \sigma_n)J_E Sx_n  \big)-J_E(W_n x_n)\Vert =0.
 \end{align}
Now, it is shown that $\displaystyle\lim_{n\rightarrow\infty}\Vert J_E u_n - J_E W_n x_n \Vert=0$,
 \begin{align*}
 \Vert J_Eu_n-J_E W_n x_n\Vert =&\big\Vert \big((1-\alpha_n)J_{E}x_n\\
 &+\alpha_n J_{E}\Pi_{C} J_{E}^{-1}\big(\sigma_n J_{E}W_n x_n+ (1-\sigma_n )J_{E}Sx_n)\big)\\
 &-J_E W_n x_n \big\Vert \\
 =&\big\Vert \big((1-\alpha_n)J_{E}x_n -(1-\alpha_n)J_E W_n x_n \big)\\
 &+\alpha_n \big( J_{E}\Pi_{C} J_{E}^{-1}\big( \sigma_n J_{E}W_n x_n+ (1-\sigma_n )J_{E}Sx_n \big)\\
 &-J_E W_n x_n \big)\big\Vert \\
 \leq&(1-\alpha_n)\Vert J_{E}x_n-J_E u_n \Vert \\
 &+(1-\alpha_n)\Vert J_{E}u_n-J_E W_n x_n \Vert \\
 &+\alpha_n\big\Vert \big( J_{E}\Pi_{C} J_{E}^{-1}\big( \sigma_n J_{E}W_n x_n+(1-\sigma_n )J_{E}Sx_n \big)\\
 &-J_E W_n x_n \big)\big\Vert,
 \end{align*}
 which implies that
 \begin{align*}
 \alpha_n\Vert J_Eu_n-J_EW_nx_n \Vert \leq& (1-\alpha_n)\Vert J_{E}x_n-J_E u_n \Vert\\
 &+\alpha_n\big\Vert \big( J_{E}\Pi_{C} J_{E}^{-1}\big(\sigma_n J_{E}W_n x_n+(1-\sigma_n )J_{E}Sx_n \big)\\
 &-J_E W_n x_n \big)\big\Vert,
 \end{align*}
 therefore,
 \begin{align*}
  \Vert J_Eu_n-J_EW_nx_n\Vert \leq& (1-\alpha_n)\frac{\Vert J_{E}x_n-J_Eu_n\Vert}{\alpha_n}+\big\Vert \big( J_{E}\Pi_{C} J_{E}^{-1}\big(\sigma_n J_{E}W_n x_n\\
  &+(1-\sigma_n)J_{E}Sx_n\big)-J_EW_n x_n\big)\big\Vert
 \end{align*}
 now, using \eqref{wrr} and the condition (ii), it is concluded that
 \begin{equation*}
 \lim_{n\rightarrow\infty}\Vert J_E u_n - J_E W_n x_n \Vert=0,
 \end{equation*}
 hence, because $E^{*}$  is uniformly smooth, it is induced that
 \begin{equation*}
 \lim_{n\rightarrow\infty}\Vert u_n -W_n x_n \Vert=0,
 \end{equation*}
 therefore, it is deduced from \eqref{qff} that
 \begin{equation}\label{wuu}
 \lim_{n\rightarrow\infty}\Vert x_n-W_nx_n\Vert=0.
 \end{equation}

 Since $\lbrace x_n \rbrace $ is bounded, there exists a subsequence $\lbrace x_{n_k} \rbrace $ which converges weakly to a point $ x^{*} \in E$. First, we show that $ x^{*}\in\bigcap_{i=1}^{\infty}Fix(T_i)$. To see that, by Theorems  \ref{2.8} and \ref{2.9},  the mapping $W : C \rightarrow C$ satisfies
\begin{equation}\label{15}
    \lim_{n\rightarrow \infty}\|W_{n}x^{*}- Wx^{*}\|=0.
\end{equation} Moreover, from Theorem \ref{2.8}, it is followed that $ Fix(W)=\cap_{i=1}^{\infty}Fix(T_i) $. Assume that $ x^{*}\notin\cap_{i=1}^{\infty}Fix(T_i)$ then $x^{*}\neq Wx^{*}$ and using \eqref{wuu}, \eqref{15} and Opial's property of Banach space, it is concluded that
 \begin{align*}
 \liminf_{k\rightarrow \infty}\Vert x_{n_k}-x^{*}\Vert <&\liminf_{k\rightarrow \infty}\Vert x_{n_k}-Wx^{*}\Vert \\
 \leq &\liminf_{k\rightarrow \infty}\big(\Vert x_{n_k}-W_{n_k}x_{n_k}\Vert +
 \Vert W_{n_k}x_{n_k}-W_{n_k}x^{*}\Vert\\
 &+\Vert W_{n_k}x^{*}-Wx^{*}\Vert \big)\\
 \leq &\liminf_{k\rightarrow \infty}\Vert x_{n_k}-x^{*}\Vert.
 \end{align*}
 which is a contradiction. Therefore, $ x^{*}\in\cap_{i=1}^{\infty}Fix(T_i)$.

  Next, it will be checked that $x^{*}\in M_{1}^{-1}0$. From \eqref{qff} and the fact that $\lbrace x_{n_k}\rbrace$ converges weakly to $x^{*}$, there exists a subsequence $\lbrace u_{n_k}\rbrace$ of $\lbrace u_n\rbrace$ converging weakly to $x^{*}$
  and therefore from (\ref{qcc}), it is induced that $\lbrace J_{\lambda_{n_k}}^{M_1}(u_{n_k}+e_{n_k})\rbrace$ converges
  weakly to $x^{*}$. Also, from $(iv)$ and \eqref{qcc}, it is implied that
  \begin{equation*}
  \lim_{n\rightarrow\infty}\Vert (u_n+e_n)-J_{\lambda_n}^{M_1}(u_n+e_n)\Vert =0,
  \end{equation*}
    hence, since $E$ is uniformly smooth, it is understood that
  \begin{equation}\label{wpp}
  \lim_{n\rightarrow\infty} \Vert J_E(u_n+e_n)-J_E J_{\lambda_n}^{M_1}(u_n+e_n)\Vert =0.
  \end{equation}
  Since $ J_{\lambda_n}^{M_1}$ is the generalized resolvent of $ M_1$, it is observed that
  \begin{equation*}
  \frac{J_E(u_n+e_n)-J_E J_{\lambda_n}^{M_1}(u_n+e_n)}{{\lambda_n}}\in M_1 J_{\lambda_n}^{M_1}(u_n+e_n),\quad\forall n\in \mathbb{N}.
  \end{equation*}
  From the monotonicity of $M_1$, it is deduced that
  \begin{equation}\label{opst}
  \Big\langle r- J_{\lambda_{n_k}}^{M_1}(u_{n_k}+e_{n_k}) , t^{*}- \frac{J_E(u_{n_k}+e_{n_k})-J_E J_{\lambda_{n_k}}^{M_1}(u_{n_k}+e_{n_k})}{{\lambda_{n_k}}} \Big\rangle \geq 0,
  \end{equation}
 for all $(r,t^{*})\in M_1$. From (\ref{wpp}) and the condition $0<a\leq \lambda_{n_k}$, it is followed that
 $\langle r-x^{*},t^{*}-0\rangle \geq 0,$ for all $(r,t^{*})\in M_1.$ Since $M_1$ is maximal monotone, we have
 $x^{*}\in M_{1}^{-1}0$.

  Next, we show that $x^{*}\in A^{-1}(M_{2}^{-1}0)$. From \eqref{qxx} and the fact that $\lbrace x_{n_k}\rbrace$ converges weakly to $x^{*}$, there exists a subsequence $\lbrace z_{n_k}\rbrace$ of $\lbrace z_n\rbrace$ converging weakly to $x^{*}$
  and since $A$ is bounded and linear, we also have that $\lbrace Az_{n_k}\rbrace$ converges weakly to $Ax^{*}$. Therefore, from (\ref{qnn}), we have $\lbrace Q_{\mu_{n_k}}^{M_2}Az_{n_k}\rbrace$ converges weakly to $Ax^{*}$.
  Also, since $F$ is uniformly smooth, it is induced from (\ref{qnn}) that
  \begin{equation}\label{waa}
  \lim_{n\rightarrow\infty} \Vert J_F Az_n-J_F\omega_n\Vert =0.
  \end{equation}
   Since $Q_{\mu_n}^{M_2}$ is the generalized resolvent of $M_2$,
   it is understood that
  \begin{equation*}
  \frac{J_FAz_n-J_F Q_{\mu_n}^{M_2}Az_n}{{\mu_n}} \in M_2 Q_{\mu_n}^{M_2} Az_n, \quad \forall n\in  \mathbb{N}.
  \end{equation*}
  From the monotonicity of $M_2$, it follows that
  \begin{equation}\label{quos}
  \Big\langle b- Q_{\mu_{n_k}}^{M_2}Az_{n_k} , f^{*}- \frac{J_FAz_{n_k}-J_F Q_{\mu_{n_k}}^{M_2}Az_{n_k}}{{\mu_{n_k}}}  \Big\rangle \geq 0, \quad \forall (b,f^{*}) \in M_2.
  \end{equation}
  From (\ref{waa}) and the condition $0<a\leq \mu_{n_k}$, it is concluded that $\langle b-Ax^{*},f^{*}-0\rangle \geq 0,$ for all $(b,f^{*})\in M_2$. Since $M_2$ is maximal monotone, it is implied that $x^{*}\in A^{-1}(M_{2}^{-1}0)$.\\
  Therefore, $x^{*}\in \Omega = M_{1}^{-1}0\cap A^{-1}(M_{2}^{-1}0)\cap\big(\bigcap_{i=1}^{\infty}Fix(T_i)\big)$ hence, $\Omega \neq \emptyset $.

 (b) Now, let $\bar{x}$ be an arbitrary element of $\omega_\omega (x_n)$ (the set of all weak limit point of the sequence $\{x_n\}$). Then there exists another subsequence $\lbrace x_{n_i} \rbrace$ of $\lbrace x_n\rbrace$  which converges weakly to  $\bar{x}$. Clearly, repeating the same argument, it is implied that $\bar{x}\in \Omega$. It is claimed that $\bar{x}=x^{*}$. Indeed, suppose that
  $\bar{x}\neq x^{*}$. Obviously, from \eqref{qss}, the sequences $\{x_n\}$ is cauchy and hence the sequences $\{\Vert x_n-\bar{x}\|\}$ and $\{\Vert x_n-x^*\|\}$ are convergent. Again, using Opial's property, it is concluded that
  \begin{align*}
  \lim_{n\rightarrow\infty}\Vert x_n-\bar{x}\|&=\liminf_{i\rightarrow\infty}\Vert x_{n_i}-\bar{x}\Vert <\liminf_{i\rightarrow\infty}\Vert x_{n_i}-x^*\Vert=\lim_{n\rightarrow\infty}\Vert x_n-x^*\| \\ &=\liminf_{k \rightarrow\infty}\Vert x_{n_k}-x^{*}\| <\liminf_{k\rightarrow\infty}\Vert x_{n_k}-\bar{x}\Vert=\lim_{n\rightarrow\infty}\Vert x_n-\bar{x}\|,
  \end{align*}
  this is a contradiction and thus $\bar{x}= x^{*}$. Therefore, $\omega_\omega (x_n)$ is singleton. Thus $\lbrace x_n\rbrace$ converges weakly to $x^*\in\Omega$.
  Since norm is weakly lower semicontinuous, it is implied from \eqref{zzz} that
  \begin{align*}
  \phi_E (\omega_0,x_1) &= \phi_E (\Pi_\Omega x_1,x_1)\leq \phi_E (x^{*},x_1)\qquad\qquad \\
  &= \Vert x^{*}\Vert ^{2} - 2\langle x^{*},J_Ex_1\rangle+\Vert x_1\Vert ^{2} \\
  &\leq \liminf _{k\rightarrow\infty}(\Vert x_{n_k}\Vert ^{2} - 2\langle x_{n_k},J_Ex_1\rangle+\Vert x_1\Vert ^{2})\\
  &= \liminf _{k\rightarrow\infty} \phi_E (x_{n_k},x_1)\\
  &\leq \limsup _{k\rightarrow\infty} \phi_E (x_{n_k},x_1)\leq \phi_E (\omega_0,x_1),
  \end{align*}
  hence, from the definition of $\Pi_\Omega x_1$, it is understood that $\omega_0=x^{*}$ and
  \begin{equation*}
  \lim_{k\rightarrow\infty}\phi_E(x_{n_k},x_1)=\phi_E(x^{*},x_1)=\phi_E(\omega_0,x_1).
  \end{equation*}
  So, it is deduced that $\displaystyle\lim_{k\rightarrow\infty}\Vert x_{n_k}\Vert =\Vert \omega_0\Vert$. From the Kadec-Klee property of $E$, it is concluded that $\displaystyle\lim_{k\rightarrow\infty} x_{n_k} = \omega_0$. Therefore, $\displaystyle\lim_{k\rightarrow\infty} x_n = \omega_0$. Thus $\lbrace x_n\rbrace$ converges strongly to $x^{*}$ where $x^{*}=\Pi_\Omega x_1$.
\end{proof}

\section{\textbf{Applications and numerical example}}
In this section, using Theorem \ref{aaa}, a new strong convergence theorem in Banach spaces will be demonstrated. Let $E$ be a Banach space and $f$ be a proper, lower semicontinuous and convex function of $E$ into $(-\infty ,\infty]$.\\ Recall the definition of the subdifferential $\partial f$ of $f$ as follows:
\begin{equation*}
\partial f(x)=\lbrace z^*\in E^*:f(x)+\langle y-x,z^* \rangle \leq f(y),\,\,\forall y\in E\rbrace
\end{equation*}
for all $x\in E$. It is known that $\partial f$ is a maximal monotone operator by Rocfellar [21]. Let $C$ be a nonempty, closed and convex subset of $E$ and $i_C$ be the indicator function of $C$, i.e.,
\begin{equation*}
i_C(x)= \left\{
\begin{array}{r}
0 \quad x\in C,\\
\,\infty \quad x\notin C.
\end{array} \right.
\end{equation*}
 Then $i_C$ is a proper, lower semicontinuous and convex function on $E$ and hence, the subdifferential $ \partial i_C$ of $i_C$ is a maximal monotone operator. Therefore, the generalized resolvent $j_\lambda $ of $ \partial i_C$ for $\lambda >0$ is defined as follows:
 \begin{equation}
 J_\lambda x=(J+\lambda \partial i_C)^{-1}Jx, \quad \forall x\in E.
 \end{equation}
 For any $x\in E$ and $u\in C$, the following relations are hold:
 \begin{align*}
 u=&J_\lambda x \Longleftrightarrow Jx\in Ju+\lambda \partial i_Cu\\ &\Longleftrightarrow \frac{1}{\lambda}(Jx-Ju)\in  \partial i_Cu\\&\Longleftrightarrow i_Cy\geq \langle y-u,\frac{1}{\lambda}(Jx-Ju)\rangle +i_Cu,\,\,\,\forall y\in E\\
 &\Longleftrightarrow 0\geq \langle y-u,\frac{1}{\lambda}(Jx-Ju)\rangle ,\,\,\,\forall y\in C\\
 &\Longleftrightarrow \langle y-u,Jx-Ju\rangle \leq 0,\,\,\,\forall y\in C\\
 &\Longleftrightarrow u=\Pi_Cx.
 \end{align*}
 Next, using Theorem \eqref{aaa}, a strong convergence theorem for finding minimizers of convex functions in two Banach spaces is demonstrated.

\begin{theorem}
Let $E$ and $F$ be two $2$-uniformly convex and uniformly smooth real Banach spaces that satisfies the Opial condition and let $J_E$ and $J_F$ be the duality mappings on $E$ and $F$, respectively. Let $C$
and $Q$ be nonempty, closed and convex subsets of $E$ and $F$ respectively. Let $A:E\longrightarrow F$ be a bounded linear operator such that $A\neq0$ and with the adjoint operator $A^*$. Suppose that $S:C\longrightarrow E$ be a nonexpansive mapping and $\lbrace T_i \rbrace_{i=1}^{\infty} :C\longrightarrow C$ a family of nonexpansive mappings. For every $n\in \mathbb{N}$, let $W_n$ be a $W-mapping$ generated by Definition \ref{quu}. Let $x_1\in C$ and let $\lbrace x_{n}\rbrace$, $\lbrace u_{n}\rbrace$ and $\lbrace y_{n}\rbrace$ be the sequences generated by
  \begin{align*}
  \begin{cases}
  u_n=J_{E}^{-1}\Big((1-\alpha_n)J_{E}x_n+\alpha_n J_{E}\Pi_{C} J_{E}^{-1}(\sigma_n J_{E}W_n x_n+(1-\sigma_n )J_{E}Sx_n)\Big);\\
  z_n=\Pi_C(u_n+e_n);\quad \quad\omega_n=\Pi_Q(Az_n);\\
  y_n=\Pi_C J_{E}^{-1}\Big(J_{E}z_n-\gamma A^*J_{F}(Az_n-\omega_n)\Big);\\
  C_n=\lbrace z\in C,\big\langle \omega_n-Az,J_{F}(Az_n -\omega_n)\big\rangle\geq 0\rbrace;\\
  D_n=\lbrace z\in E,\phi_{E}(z,z_n)\leq \phi_{E}(z, u_n+e_n)\rbrace;\\
  Q_n=\lbrace z\in E,\langle x_n -z, J_{E}x_1-J_{E}x_n \rangle\geq 0 \rbrace;\\
  x_{n+1}=\Pi_{C_n\cap Q_n\cap D_n}x_1,\quad \forall n \in \mathbb{N}.\\
  \end{cases}
  \end{align*}
 where $0<\gamma<\frac{2}{c\parallel A\parallel^{2}}$ with $c>0$.
   Let $ \lbrace\alpha_n\rbrace, \lbrace\sigma_n\rbrace $ be real sequences in  $(0,1)$ satisfying the conditions
  \begin{itemize}
  \item [{\rm($i$)}] $\displaystyle\lim_{ n\rightarrow \infty}\alpha_n = 0$;
  \item [{\rm($ii$)}] $\displaystyle\lim_{n\rightarrow \infty}\dfrac{\Vert J_Ex_n-J_E u_n\Vert}{\alpha_n}=0$;
  \item [{\rm(iii)}] $\displaystyle\lim_{n\rightarrow \infty}\sigma_n =1$;
  \end{itemize}
   and the error sequence $\lbrace e_n\rbrace \subseteq E$ such that
  \begin{itemize}
    \item [{\rm($iv$)}] $\displaystyle\lim_{n\rightarrow \infty}\Vert e_n\Vert =0$.
  \end{itemize}
   Let $\Omega=C\cap A^{-1}Q\cap\big(\bigcap_{i=1}^{\infty}Fix(T_i)\big)$.
  Suppose that one of the following two conditions is hold:
  \begin{itemize}
    \item [(v)] the sequence $\{x_n\}$ is bounded,\\
    or
    \item [(vi)] $\Omega \neq \emptyset$.
  \end{itemize}
  Then
  \begin{enumerate}
    \item [(a)]$\Omega \neq \emptyset$ if and only if the sequence $\{x_n\}$ is bounded,
    \item [(b)]The sequence $\lbrace x_n\rbrace$ converges strongly to a point $\omega_0\in \Omega$ where $\omega_0 =\Pi_\Omega x_1$.
  \end{enumerate}
  \end{theorem}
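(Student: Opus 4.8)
The plan is to read the statement off from Theorem \ref{aaa} by specializing the two maximal monotone operators to subdifferentials of indicator functions. Concretely, I would set $M_1=\partial i_C$ on $E$ and $M_2=\partial i_Q$ on $F$. By Rockafellar's theorem both are maximal monotone operators, so the monotonicity hypotheses of Theorem \ref{aaa} are met.

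The first thing to verify is the identification of the null-point sets. I would show $(\partial i_C)^{-1}0=C$: by definition $0\in\partial i_C(x)$ means $i_C(x)+\langle y-x,0\rangle\le i_C(y)$ for all $y\in E$, i.e. $i_C(x)\le i_C(y)$ for every $y$; since $i_C$ takes the value $0$ exactly on $C$ and $+\infty$ elsewhere, this holds precisely when $x\in C$. The same computation gives $(\partial i_Q)^{-1}0=Q$. Hence $M_1^{-1}0=C\neq\emptyset$ and $M_2^{-1}0=Q\neq\emptyset$, the inclusion hypothesis $M_1^{-1}0\cap A^{-1}(M_2^{-1}0)=C\cap A^{-1}Q\subseteq C$ holds automatically, and the target set becomes $\Omega=C\cap A^{-1}Q\cap\big(\bigcap_{i=1}^{\infty}Fix(T_i)\big)$, matching the statement.

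Next I would invoke the resolvent--projection identity established in the display immediately preceding the theorem: for every $\lambda>0$ the generalized resolvent $(J_E+\lambda\,\partial i_C)^{-1}J_E$ equals the generalized projection $\Pi_C$, and likewise $(J_F+\mu\,\partial i_Q)^{-1}J_F=\Pi_Q$ for every $\mu>0$. Because these resolvents do not depend on the parameter, I may fix arbitrary sequences $\{\lambda_n\},\{\mu_n\}$ with $0<a\le\lambda_n,\mu_n$ (for instance $\lambda_n=\mu_n=1$), so the corresponding condition of Theorem \ref{aaa} is trivially satisfied. Substituting $J_{\lambda_n}^{M_1}=\Pi_C$ and $Q_{\mu_n}^{M_2}=\Pi_Q$, the recursion defining $\{x_n\},\{u_n\},\{z_n\},\{\omega_n\},\{y_n\}$ and the sets $C_n,D_n,Q_n$ in the present statement becomes, term by term, the recursion of Theorem \ref{aaa}.

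With the dictionary above in place, every remaining hypothesis transfers verbatim: $E,F$ are $2$-uniformly convex, uniformly smooth and satisfy the Opial condition; $A\neq0$ is a bounded linear operator with adjoint $A^*$; the step size obeys $0<\gamma<\frac{2}{c\|A\|^2}$; conditions (i)--(iv) on $\{\alpha_n\},\{\sigma_n\},\{e_n\}$ are identical; and the two alternatives (v)/(vi) are stated in the same form. Therefore Theorem \ref{aaa} applies and delivers both conclusions directly: (a) $\Omega\neq\emptyset$ if and only if $\{x_n\}$ is bounded, and (b) $\{x_n\}$ converges strongly to $\omega_0=\Pi_\Omega x_1$. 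I do not expect any genuine obstacle here, since the entire content of the result is already carried by Theorem \ref{aaa}; the only points needing care are the null-set computation $(\partial i_C)^{-1}0=C$ and the observation that $\lambda_n,\mu_n$ may be chosen freely, both of which are routine.
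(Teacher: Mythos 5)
Your proposal is correct and follows essentially the same route as the paper: the paper proves this theorem by specializing Theorem \ref{aaa} to $M_1=\partial i_C$ and $M_2=\partial i_Q$, using Rockafellar's maximal monotonicity of subdifferentials and the identity $(J+\lambda\,\partial i_C)^{-1}J=\Pi_C$ established in the display immediately preceding the statement. Your additional explicit verification that $(\partial i_C)^{-1}0=C$ and that $\lambda_n,\mu_n$ may be chosen freely only makes explicit what the paper leaves implicit.
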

   Next, Theorem \ref{aaa} will be illustrated by an example:\\
   \textbf{A numerical example} Let $E=F=\mathbb{R}$, the set of real numbers, with the inner product defined by $\langle x,y\rangle =xy, \forall x,y\in \mathbb{R}$, and usual norm $\vert\cdot\vert$. Suppose that $C=[0,1]$ and the mapping $A:\mathbb{R}\rightarrow \mathbb{R}$ is defined by $A(x)=-2x,\forall y\in \mathbb{R}$. Let $T_i:C\rightarrow C$ be the identity function for each $i\in \mathbb{N}$ and hence  the mapping $W_n: C\rightarrow C$ is the identity function for each $n\in \mathbb{N}$. Also suppose that $S:C\rightarrow \mathbb{R}$ is the identity function. Let $M_1, M_2:\mathbb{R}\rightarrow 2^{\mathbb{R}}$ be defined by $M_1(x)=\{2x\},\forall x\in \mathbb{R}$ and $M_2(y)=\{3y\}, \forall y\in \mathbb{R}$. Then $M_1^{-1}0\cap A^{-1}(M_2^{-1}0)\subseteq C$ and $\Omega \neq \emptyset$. Let $\lbrace\alpha _n\rbrace$ and $\lbrace\sigma _n\rbrace$ be arbitrary real sequences in $(0, 1)$ such that $\displaystyle\lim_{n\rightarrow \infty}\alpha_n=0$ and $\displaystyle\lim_{n\rightarrow \infty}\sigma_n =1$. Let $\lambda_n=\mu_n=0.25, \:e_n=n^{-1}$ for each $n\in \mathbb{N}$, and $\gamma=0.1$. Then the sequences $\lbrace x_n\rbrace, \lbrace u_n\rbrace, \lbrace z_n\rbrace, \lbrace \omega_n\rbrace$ and $\lbrace y_n\rbrace$ generated by \eqref{mnasd} az follows: given initial value $x_1\in C$
   \begin{align*}
  \begin{cases}
  u_n=x_n\\
  z_n=\dfrac{2}{3}(x_n+n^{-1});\quad\quad\omega_n=\dfrac{-16}{21}(x_n+n^{-1});\\
  y_n=\Pi_C\big(\dfrac{116}{210}(x_n+n^{-1})\big);\\
  C_n=\big\lbrace z\in C,z\leq \dfrac{16(x_n+n^{-1})}{42}\big\rbrace;\\
  D_n=\big\lbrace z\in E,z\leq \dfrac{5x_n+2n^{-1}}{6}\big\rbrace;\\
  Q_n=\lbrace z\in E,(x_n-z)(x_1-x_n)\geq 0\rbrace;\\
  x_{n+1}=\Pi_{C_n\cap Q_n\cap D_n}x_1,\quad\forall n\in \mathbb{N}.\\
  \end{cases}
  \end{align*}

\vspace{0.1in}
\hrule width \hsize \kern 1mm
\end{document}